\def\section{\@startsection{section}{1}%
  \z@{1.1\linespacing\@plus\linespacing}{.8\linespacing}%
  {\normalfont\Large\scshape\centering}}
\theoremstyle{plain}
\newtheorem*{conj*}{Root Groups Conjecture}
\newtheorem*{thm1.2}{(1.2) Theorem}
\newtheorem*{thm1.3}{(1.3) Theorem}
\newtheorem*{thm1.4}{(1.4) Theorem}
\newtheorem*{prop*}{Proposition}
\newtheorem{Thm}{Theorem}
\newtheorem{Prop}[Thm]{Proposition}
\newtheorem{prop}{Proposition}[section]
\newtheorem{thm}[prop]{Theorem}
\newtheorem{cor}[prop]{Corollary}
\newtheorem{lemma}[prop]{Lemma}
\newtheorem{term}[prop]{Terminology}
\theoremstyle{definition}
\newtheorem{Defi}[Thm]{Definition}
\newtheorem*{Def*}{Definition}
\newtheorem{example}[prop]{Example}
\newtheorem{notation}[prop]{Notation}
\newtheorem*{notation*}{Notation}
\newcommand{\calc}{\mathcal{C}}
\newcommand{\calk}{\mathcal{K}}
\newcommand{\calz}{\mathcal{Z}}
\newcommand{\zz}{\mathbb{Z}}
\newcommand{\ga}{\alpha}
\newcommand{\gb}{\beta}
\newcommand{\gc}{\gamma}
\newcommand{\gC}{\Gamma}
\newcommand{\gD}{\Delta}
\newcommand{\gL}{\Lambda}
\newcommand{\gvp}{\varphi}
\newcommand{\gr}{\rho}
\newcommand{\gs}{\sigma}
\newcommand{\lan}{\langle}
\newcommand{\ran}{\rangle}
\newcommand{\onto}{\twoheadrightarrow}
\newcommand{\half}{\textstyle{\frac{1}{2}}}
\newcommand{\widebar}[1]{\overset{\mskip1mu\hrulefill\mskip1mu}{#1}
                \vphantom{#1}}
\newcommand{\W}{\widebar}
\newcommand{\cl}{\gC^{\gvp}}
\newcommand{\invlim}{\varprojlim}
\numberwithin{equation}{section}
\begin{document}
\title[Subnormal  closure]{Subnormal closure  of a homomorphism}
\author[Emmanuel D.~Farjoun\quad Yoav Segev]
{Emmanuel~D.~Farjoun\qquad Yoav Segev}

\address{Emmanuel D.~Farjoun\\
        Department of Mathematics\\
        Hebrew University of Jerusalem, Give Ram\\
        Jerusalem 91904\\
        Israel}
\email{farjoun@math.huji.ac.il}

\address{Yoav Segev\\
        Department of Mathematics\\
        Ben Gurion University\\
        Beer Shea 84105\\
        Israel}
\email{yoavs@math.bgu.ac.il}
\keywords{subnormal map, normal closures tower, hypercentral group extension}
\subjclass[2010]{Primary: 20E22; Secondary: 20J06, 20F28, 18A40}

\begin{abstract}
Let $\gvp\colon\gC\to G$ be a homomorphism of groups.  In this paper we introduce
the notion of a subnormal map (the inclusion of a subnormal subgroup into a group
being a basic prototype).  We then consider factorizations $\gC\xrightarrow{\psi} M\xrightarrow{n} G$
of $\gvp,$ with $n$ a subnormal map. We search for a universal such factorization.
When $\gC$ and $G$ are finite we show that such universal factorization exists: $\gC\to\gC_{\infty}\to G,$
where $\gC_{\infty}$ is a hypercentral extension of the subnormal closure $\calc$
of $\gvp(\gC)$ in $G$ (i.e.~the kernel of the extension $\gC_{\infty}\onto\calc$ is contained
in the hypercenter of $\gC_{\infty}$).  This is closely related to the a relative version of the Bousfield-Kan $\zz$-completion tower
of a space.
The group $\gC_{\infty}$  is  the inverse limit of the normal closures tower of $\gvp$ introduced
by us in a recent paper. We prove several stability and  finiteness properties of the tower and its inverse
limit $\gC_\infty$.
\end{abstract}

\date{\today}
\maketitle
 
\section{Introduction}

Throughout this note $\gvp\colon\gC\to G$ is a homomorphism of groups.
In a previous  paper \cite{FS1}  we considered
the notion of the {\it free normal closure} $\cl$ of $\gvp,$ (related to \cite{BH, BHS}) and the {\it free normal closures tower}
$\{\gC_{i,\gvp}\}_{i=1}^{\infty}$ of $\gvp$ (see equation \eqref{eq tower of fnc} below).
In this paper we study the behavior and the properties of the inverse limit
$\gC_{\infty,\gvp}$ of the tower of free normal closures
of $\gvp$.  This tower generalizes and connects  the quotients of the lower central series
$\gC/\gc_i(\gC),\ i=1, 2, 3,\dots,$   gotten here for $G=1,$ and the descending series
of successive normal closures of a subgroup $H$ of $G$ (see \S2).  Notice that
in the case $G=1,$ the group $\gC_{\infty,\gvp}$ is the nilpotent completion of $\gC$.
Thus for an arbitrary map $\gvp,$ the group $\gC_{\infty,\gvp}$ can be thought of as a
relative nilpotent completion associated with a homomorphism rather than with a group.
Some of the results here support this point of view.

Let us recall from \cite[section 4]{FS1}
that the normal closures tower (we often omit the word ``free'')
associated with a homomorphism  $\gvp$ is a tower of groups as follows:

\begin{equation}\label{eq tower of fnc}
\xymatrix{
\gC\ar[dr]^>>>{\gvp_{k+1}}\ar@/^1ex/[drr]^{\gvp_k}\ar@/^2ex/[drrrr]^{\gvp_2}\ar@/^4ex/[drrrrr]^{\gvp_1}\\
\dots\ar[r]  &\gC_{k+1}\ar[r]^<<{\scriptscriptstyle{\W{\gvp_k}}} &
\gC_k\ar[r] &
\dots \ar[r] & \gC_2\ar[r]^<<{{\scriptscriptstyle{\W{\gvp_1}}}} &\gC_1=G\\
\\
}
\end{equation}

\noindent
where $\W{\gvp_i}$ is a normal map (our terminology for  a crossed module),
and where $\gC_{i+1}=\gC_{i+1,\,\gvp}$ is the (free) normal closure $\gC_i^{\gvp_i}$
of the map $\gC\xrightarrow{\gvp_i}\gC_i,$ for all $i\ge 1$.
The group $\gC_{\infty}=\gC_{\infty,\gvp},$ the stability of the tower $\{\gC_{i,\gvp}\}$ and its relation to the groups $\gC, G$ and the map $\gvp,$
are thus the main topics  of the present study.

One way to think about the above tower is that it represents an attempt  to factor the map $\gvp,$ in a universal way,
into a composition of ``simpler maps'' (i.e.~normal maps).  This, of course, cannot
be done in general, and $\gC_{\infty,\gvp}$ is a kind of ``hybrid'' of $\gC$ and
$G$ giving a factorization $\gC\to\gC_{\infty,\gvp}\to G$.  Passing to topological spaces via the classifying space
construction we get a map $B\gC\to BG.$ The present result  shows that for finite groups $\gC, G$ this map has a finite "relative" Bousfield-Kan
 tower of principal fibrations \cite{BK} whose fibres are, in general,  neither connected nor nilpotent groups:
\[
B\gC_\infty=B\gC_n\to B\gC_{n-1}\to\cdots \to B\gC_2\to BG;
\]
with a terminal term $B\gC_\infty=B\gC_n\to BG$ being  {\em the universal space,} under $B\gC$ for which there is such a tower of principal fibrations.
This raises the question of finding such universal decompositions of more general maps of spaces $X\to Y.$
Notice that we have an induced map $\gvp_{\infty}\colon\gC\to\gC_{\infty};$ we often ask how far is this map from an isomorphism.

One quick corollary of our  main results concerns a map of nilpotent groups:

\begin{Thm}\label{thm nil}
Let $\gvp\colon\gC\to G$ be a homomorphism of nilpotent groups, then $\gvp_{\infty}\colon\gC\to\gC_{\infty}$
is an isomorphism.
\end{Thm}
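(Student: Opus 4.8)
The plan is to understand the normal closures tower of $\gvp$ explicitly when both $\gC$ and $G$ are nilpotent and to show it stabilizes already at a finite stage, with the natural map $\gvp_\infty$ being an isomorphism. First I would recall (from \cite{FS1}) the defining property of the free normal closure $\gC_{i+1}=\gC_i^{\gvp_i}$ of the map $\gvp_i\colon\gC\to\gC_i$: it is the universal group receiving a normal map (crossed module) to $\gC_i$ through which $\gvp_i$ factors, and its underlying group is generated by the $\gC$-conjugates of (a copy of) the image of $\gvp_i$ — more precisely, by the relative version, it is a free-ish object built from $\gC$ and the normal closure of $\gvp_i(\gC)$ in $\gC_i$. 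The key structural input I expect to use is that the kernel of each map $\W{\gvp_i}\colon\gC_{i+1}\to\gC_i$ lies in the hypercenter, or better, that the associated extension is a central-by-something extension governed by a relative homology/Schur-multiplier term, so that nilpotence of $\gC_i$ is inherited by $\gC_{i+1}$.

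The main steps, in order: (1) Show by induction that each $\gC_i$ is nilpotent when $\gC$ and $G$ are. The base case $\gC_1=G$ is given. For the inductive step, $\gC_{i+1}$ is an extension of the normal closure $\mathcal{N}$ of $\gvp_i(\gC)$ in $\gC_i$ (which is nilpotent, being a subgroup of the nilpotent group $\gC_i$) by a kernel sitting in the hypercenter; a hypercentral (in fact, one should check it is actually central or at least nilpotent-by-nilpotent) extension of a nilpotent group by a nilpotent group is nilpotent — here I would lean on the relevant result from \cite{FS1} identifying the kernel as a quotient of a relative $H_2$, together with the fact that for nilpotent $\gC$ mapping to nilpotent $\gC_i$ these terms are nilpotent groups, or simply that the whole $\gC_{i+1}$ is a quotient of $\gC \times \mathcal N$-type construction. (2) Show the tower stabilizes: since all groups in the tower are nilpotent and finitely generated in the relevant relative sense is not assumed, so instead I would argue that once the map $\gvp_i$ itself becomes a \emph{normal} map (crossed module), the free normal closure construction returns $\gC$ itself, i.e.\ $\gvp_{i+1}$ is an isomorphism onto $\gC$ and the tower is constant from there on. So it suffices to show $\gvp_i\colon\gC\to\gC_i$ is eventually a normal map. (3) Identify $\gC_\infty$ as the inverse limit and conclude $\gvp_\infty$ is an isomorphism.

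The hard part will be step (2): proving the tower actually \emph{stabilizes at a finite stage} (rather than merely that $\gvp_\infty$ is injective or that the inverse limit is $\gC$) in the nilpotent case. For $G=1$ this is the classical fact that the lower central series of a nilpotent group terminates, so the expected mechanism is that the successive normal closures $\mathcal N_i$ of $\gvp_i(\gC)$ in $\gC_i$ form a descending chain analogous to a lower-central-type series of $G$ relative to the image, which must terminate because $G$ is nilpotent; simultaneously the "central corrections" added on the $\gC$-side are controlled by the nilpotency class of $\gC$. Concretely I would try to bound the length of the tower by (something like) the sum of the nilpotency classes of $\gC$ and $G$, using at each stage that passing from $\gC_i$ to $\gC_{i+1}$ strictly decreases a well-founded invariant — e.g.\ the pair (nilpotency class of $\gC_i$, "defect" of $\gvp_i$ from being normal) in lexicographic order — unless $\gvp_i$ is already normal. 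Once finiteness of the tower is in hand, $\gC_\infty=\gC_n$ for some $n$, and the universal property (Target-Theorem-style) forces $\gvp_\infty\colon\gC\to\gC_\infty$ to be an isomorphism because a map between nilpotent groups is its own universal factorization through subnormal maps — a nilpotent group embeds in itself subnormally via the identity, and the lower-central quotients already exhibit $\gvp$ as an inverse limit of normal maps, so minimality pins down $\gC_\infty \cong \gC$.
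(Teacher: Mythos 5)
Your proposal does not actually close the gap that you yourself flag as ``the hard part'': step (2) is left as a plan (``I would try to bound the length of the tower by something like\ldots'') with a claimed invariant --- ``(nilpotency class of $\gC_i$, `defect' of $\gvp_i$ from being normal) in lexicographic order'' --- that is never defined, never shown to be well founded, and never shown to decrease. Step (1) also contains a warning sign: the aside about ``nilpotent-by-nilpotent'' extensions is false in general ($S_3$ is an extension of a nilpotent group by a nilpotent group and is not nilpotent), so inheriting nilpotence along the tower would require the stronger hypercentral-kernel property, not the layered-nilpotence fallback you offer. Finally, step (3) leans on the universality statement (Theorem \ref{thm universal}), but that theorem is proved in this paper only for \emph{finite} groups, whereas Theorem \ref{thm nil} concerns arbitrary nilpotent groups; invoking it here is an unaddressed gap, and even in the finite case universality alone would not immediately give that $\gvp_\infty$ is an isomorphism rather than merely a split monomorphism.

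The paper's argument avoids all of this by not analyzing the tower $\{\gC_i\}$ abstractly but instead reducing to a situation where it can be computed outright. Since $G$ is nilpotent, $\gvp(\gC)$ is subnormal in $G$; hence Proposition \ref{prop nct} (which rests on Theorem \ref{thm subnormal factorization}) lets one replace the target $G$ by $\gvp(\gC)$ and assume $\gvp$ is surjective, without changing $\gC_\infty$. For surjective $\gvp$ with kernel $K$, \cite[Cor.~3.9(2)]{FS1} gives the explicit formula $\gC_2=\gC/[\gC,K]$, and iterating it yields $\gC_i=\gC/K_i$ where $K_1=K$ and $K_{i+1}=[\gC,K_i]$ (this is Lemma \ref{lem invlim nil}). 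Since $\gC$ is nilpotent of some class $c$, we have $K\le Z_c(\gC)=\gC$ and therefore $K_{c+1}=1$, so the series $\{K_i\}$ terminates at the trivial group, the tower of quotients $\gC/K_i$ stabilizes at $\gC$, and $\gC_\infty=\invlim\,\gC/K_i=\gC$ with $\gvp_\infty$ the identity. In short, the well-founded invariant you were searching for is the descending series $\{K_i\}$ living inside $\gC$ itself, not some invariant of the abstract groups $\gC_i$, and it only becomes visible after the subnormal-factorization reduction that your proposal does not make.
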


Theorem \ref{thm nil} is Corollary \ref{cor invlim nil}(2).  This corollary can be viewed as a version
for any map of nilpotent groups, of the
well known property that any subgroup of a nilpotent group is subnormal, i.e.~equal to its subnormal closure.

In this spirit we have for any homomorphism of finite groups:

\begin{Thm}\label{Thm nil quotients}
Let $\gvp\colon\gC\to G$ be a homomorphism of finite groups.  Then $\gvp_{\infty}$
induces an isomorphism of the descending central series quotients, $\gC/\gc_i(\gC)\cong\gC_{\infty}/\gc_i(\gC_{\infty}),$
for all $i\ge 1$.
\end{Thm}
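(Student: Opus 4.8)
The plan is to reduce the statement to the absolute case $G=1$ via the functorial properties of the normal closures tower, using the fact that the lower central series quotients depend only on the group and behave well under the tower construction. First I would record the relevant compatibility: each map $\gvp_i\colon\gC\to\gC_i$ in \eqref{eq tower of fnc} fits into the tower for $\gvp$, and the transition maps $\W{\gvp_i}\colon\gC_{i+1}\to\gC_i$ are normal maps; in particular the image of $\gvp_i$ is normal in $\gC_i$ (since a crossed module maps onto a normal subgroup), and one level up the normal closure construction only modifies things "above" that image. The key structural input I would invoke is that $\gC_{\infty}$ is a hypercentral extension of the subnormal closure $\calc$ of $\gvp(\gC)$ in $G$, as asserted in the abstract/introduction: there is a surjection $\gC_{\infty}\onto\calc$ whose kernel $Z$ lies in the hypercenter of $\gC_{\infty}$.

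Given that, the heart of the matter is the following general fact about hypercentral extensions of finite groups: if $1\to Z\to E\to Q\to 1$ with $Z$ contained in the hypercenter of the finite group $E$, then the quotient map $E\onto Q$ induces isomorphisms $E/\gc_i(E)\cong Q/\gc_i(Q)$ for all $i$. For finite $E$ the hypercenter is nilpotent, so $Z$ is a nilpotent normal subgroup which is, moreover, hypercentrally embedded — meaning $[Z,\,_{n}E]=1$ for some $n$. One shows by induction on $i$ that $Z\cdot\gc_i(E)=\gc_i(E)$ eventually stabilizes; more precisely, because $Z$ is in the hypercenter, $Z\cap\gc_i(E)$ descends to $1$ while $Z\gc_i(E)/\gc_i(E)$ is central in $E/\gc_i(E)$, and comparing the two lower central series term by term (using $\gc_{i+1}(E)=[\gc_i(E),E]$ and that $[Z,E]\le$ the next hypercenter term) forces $\gc_i(E)=\gc_i(E)Z$ for $i$ large, hence $\gc_i(E)\twoheadrightarrow\gc_i(Q)$ has kernel $\gc_i(E)\cap Z$ which one checks is exactly $\gc_i(E)\cap Z = \gamma_i$-part that dies. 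The cleanest route is: a central extension induces iso on all lower central quotients when the center maps into... — no, that is false in general, so one genuinely needs the full hypercentral hypothesis and finiteness, applied one central layer at a time along a central series of $Z$ inside $E$.

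Next I would similarly handle the bottom of the tower: $\calc$ is a subnormal subgroup of the finite group $G$, so $\calc$ is obtained from $G$ by finitely many normal-closure steps, and at no step does passing to the normal closure of $\gvp(\gC)$ change the lower central series quotients of the ambient group below the image — here I would use that $\gvp$ factors through $\calc$ and that $\gC/\gc_i(\gC)\to\calc/\gc_i(\calc)$ is what we are really comparing to $\gC/\gc_i(\gC)\to\gC_{\infty}/\gc_i(\gC_{\infty})$. Combining: $\gC_{\infty}/\gc_i(\gC_{\infty})\cong\calc/\gc_i(\calc)$ by the hypercentral-extension fact, and it remains to see $\gvp_{\infty}$ induces $\gC/\gc_i(\gC)\xrightarrow{\sim}\gC_{\infty}/\gc_i(\gC_{\infty})$. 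For that I would invoke Theorem~\ref{thm nil} (or rather its proof): applying the nilpotent-quotient functor $(-)/\gc_i$ to everything, the map $\gC/\gc_i(\gC)\to G/\gc_i(G)$ is already a map of nilpotent groups, its tower stabilizes at step $i$ or so, and the formation of $\gC_{\infty}$ commutes with the passage to $\gc_i$-quotients in the sense that $(\gC_{\infty})/\gc_i\cong (\gC/\gc_i)_{\infty}$, which by Theorem~\ref{thm nil} equals $\gC/\gc_i(\gC)$ since $\gC/\gc_i(\gC)$ is nilpotent.

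The main obstacle I anticipate is justifying that the tower construction commutes with the nilpotent-truncation functor, i.e.\ that $\gc_i(\gC_{\infty})$ is exactly the inverse image of the "$\gc_i$ of the tower applied level-wise," equivalently that $(\gC_{\infty,\gvp})/\gc_i \cong \gC_{\infty,\,\gvp_i'}$ where $\gvp_i'\colon\gC/\gc_i(\gC)\to G/\gc_i(G)$; this requires knowing that free normal closures, inverse limits, and lower central quotients all interact compatibly for finite groups, presumably using the stability/finiteness results of the tower stated earlier in the paper. The hypercentral-extension lemma itself is routine group theory once set up correctly, and the subnormal-closure bookkeeping at the bottom is harmless because $\gvp(\gC)\le\calc$ means all the $\gc_i$-quotient comparisons can be done inside $\calc$ from the start.
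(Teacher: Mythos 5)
Your first line of attack — passing through the surjection $\gC_{\infty}\onto\calc$ and invoking a ``hypercentral extension'' lemma — does not work, and the intermediate conclusion you are aiming for is itself false. You assert that if $1\to Z\to E\to Q\to 1$ is exact with $Z\le Z_{\infty}(E)$ and $E$ finite, then $E/\gc_i(E)\cong Q/\gc_i(Q)$ for all $i$. Taking $E=\zz/p^2$, $Z=p\zz/p^2$, $Q=\zz/p$ already refutes this at $i=2$. More to the point, the target identity $\gC_{\infty}/\gc_i(\gC_{\infty})\cong\calc/\gc_i(\calc)$ is not what the theorem claims and is generally wrong: for the trivial map $\gC=\zz/p^2\to G=1$ one has $\gC_{\infty}\cong\gC$ but $\calc=1$, and the $\gc_2$-quotients disagree. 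The theorem compares $\gC_{\infty}$ to $\gC$, not to $\calc$, and the extension $\gC_{\infty}\onto\calc$ points in the unhelpful direction.

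Your second, fallback idea — that $(\gC_{\infty})/\gc_i\cong(\gC/\gc_i(\gC))_{\infty,\psi}$, hence $\cong\gC/\gc_i(\gC)$ by the nilpotent case — identifies the right structural statement, but you explicitly leave its proof as an ``obstacle,'' and indeed it is essentially equivalent to the theorem itself; asserting it without proof is circular. The paper proceeds differently and more concretely. Injectivity of $\gvp_{(\infty,k)}$ is obtained by naturality of the tower (Lemma \ref{lem maps between invlimits}): the square $\gC\to G$, $\gC/\gc_k(\gC)\to G/\gc_k(G)$ induces a map $\gC_{\infty}\to(\gC/\gc_k(\gC))_{\infty,\psi}$, the target is isomorphic to $\gC/\gc_k(\gC)$ by Corollary \ref{cor invlim nil}(2) (your Theorem~1), and since the target is nilpotent of class $<k$ the map factors through $\gC_{\infty}/\gc_k(\gC_{\infty})$; the resulting composite $\gC/\gc_k(\gC)\to\gC_{\infty}/\gc_k(\gC_{\infty})\to\gC/\gc_k(\gC)$ is an isomorphism, so the first arrow is (split) injective. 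Surjectivity is where finiteness and the generation structure enter: after reducing to $G=\calc$ (Proposition \ref{prop nct}(2)), the tower stabilizes at some $\gC_t=\gC_{\infty}$ with $\gC_t=\langle\gvp_t(\gC)^{\gC_t}\rangle$ by \cite[Lemma~4.2(1)]{FS1}, and then \cite[Lemma~4.4]{FS1} (a nilpotent-group generation fact) gives that the image of $\gvp_t(\gC)$ in $\gC_t/\gc_k(\gC_t)$ is all of it. Neither of these steps appears in your proposal, and without the second one you have only the injectivity half.
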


Theorem \ref{Thm nil quotients} is proved in Proposition \ref{prop nil resid}. We note that by previous results, this is certainly not true for a
general map of groups, e.g., when $\Gamma$ is a free group with infinitely   many generators and $G=1.$

Our next result extends a partial result in   \cite{FS1} to the general finite case. An estimate of the size of
$\Gamma_\infty$ is given  below in Theorem \ref{thm finite}.
\begin{Thm}\label{Thm finite}
If  $\gvp\colon\gC\to G$ be a homomorphism of finite groups then  $\gC_{\infty,\gvp}$ is  a finite group.
\end{Thm}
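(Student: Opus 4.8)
The plan is to establish two facts: (a) every group $\gC_i$ in the tower \eqref{eq tower of fnc} is finite, and (b) the orders $|\gC_i|$ are bounded independently of $i$ — for which it suffices that $\W{\gvp_i}$ be an isomorphism for all sufficiently large $i$. Granting (a) and (b), finiteness of $\gC_\infty$ follows at once: since $\gC_\infty=\varprojlim_i\gC_i$, a common bound $|\gC_i|\le B$ forces $|\gC_\infty|\le B$, because any finitely many distinct elements of $\gC_\infty$ are separated already at some finite level $j$ and hence inject into $\gC_j$.

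To prove (a) I would induct on $i$, starting from $\gC_1=G$. Suppose $\gC_i$ is finite. Recall that $\W{\gvp_i}\colon\gC_{i+1}\to\gC_i$ is a normal map and $\gC_{i+1}=\gC_i^{\gvp_i}$ is the free normal closure of $\gvp_i\colon\gC\to\gC_i$; thus, as a group, $\gC_{i+1}$ is generated by the $\gC_i$-translates ${}^{k}\gvp_{i+1}(c)$, $k\in\gC_i$, $c\in\gC$ — a finite set whose members all have order dividing $\exp(\gC)$. Hence $\gC_{i+1}^{\mathrm{ab}}$ is a finitely generated torsion abelian group, so finite. The crossed-module axioms give $\ker\W{\gvp_i}\le Z(\gC_{i+1})$ and $\im\W{\gvp_i}=N_i$, the normal closure of $\gvp_i(\gC)$ in $\gC_i$, which is finite. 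Feeding the central extension $1\to\ker\W{\gvp_i}\to\gC_{i+1}\to N_i\to1$ into the five-term homology exact sequence $H_2(\gC_{i+1})\to H_2(N_i)\to\ker\W{\gvp_i}\to H_1(\gC_{i+1})\to H_1(N_i)\to0$ displays $\ker\W{\gvp_i}$ as an extension of a subgroup of the finite group $\gC_{i+1}^{\mathrm{ab}}$ by a quotient of the finite group $H_2(N_i)$; hence $\ker\W{\gvp_i}$, and therefore $\gC_{i+1}$, is finite.

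For (b) — the crux — I would separate the ``horizontal'' and ``vertical'' contributions. Writing $\pi_i\colon\gC_i\to G$ for the composite down the tower, a bookkeeping of images shows $\pi_i(\gC_i)=M_{i-1}$, where $M_0=G$ and $M_{j+1}=\langle\langle\gvp(\gC)\rangle\rangle_{M_j}$; as $G$ is finite, this descending chain of iterated normal closures stabilizes — necessarily at the subnormal closure $\calc$ of $\gvp(\gC)$ in $G$ — after finitely many steps, so the image in $G$ freezes. On the other side, Theorem \ref{Thm nil quotients} gives $\gC_\infty/\gc_j(\gC_\infty)\cong\gC/\gc_j(\gC)$ for all $j$, and the right-hand side stabilizes because $\gC$ is finite, so the nilpotent quotients of the tower freeze as well. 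The real obstacle is to show that once both of these have stabilized the central kernels $\ker\W{\gvp_i}$ vanish for $i\gg0$, i.e.\ to rule out an infinite accumulation of central extensions.

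I expect this to come down to controlling the kernel $Z:=\ker(\gC_\infty\onto\calc)$, which by the structure of $\gC_\infty$ described in the abstract lies in the hypercenter of $\gC_\infty$. The five-term sequence for $1\to Z\to\gC_\infty\onto\calc\to1$ bounds $Z/[\gC_\infty,Z]$ in terms of $H_2(\calc)$ and $H_1(\gC_\infty)\cong H_1(\gC)$ (the last isomorphism being the case $j=2$ of Theorem \ref{Thm nil quotients}), so $Z/[\gC_\infty,Z]$ is finite. It then remains to upgrade ``$Z\le Z_*(\gC_\infty)$'' to ``$Z$ is nilpotent'': once this is known, $Z$ is a normal nilpotent subgroup with finite $Z/[\gC_\infty,Z]$, hence $Z^{\mathrm{ab}}$ is finite, hence $Z$ is a finitely generated nilpotent group with finite abelianization, hence finite, and $\gC_\infty$ is finite-by-finite. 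Showing that the upper central series of $\gC_\infty$ terminates at a finite stage is the delicate point; I expect it to require the explicit structure of the free--normal-closure tower — bounding the number of non-trivial steps $\W{\gvp_i}$ in terms of the nilpotency data of $\gC$, the subnormal depth of $\gvp(\gC)$ in $G$, and the (finite) homology of the finite groups involved — rather than soft group theory alone.
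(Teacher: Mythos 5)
Your plan is a genuinely different route from the paper's, but it is not complete, and as it stands has both an acknowledged gap and a couple of logical problems.

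Part (a) of your plan, the finiteness of each $\gC_i$ by induction, is sound: the free normal closure $\gC_{i+1}$ is indeed generated by the $\gC_i$-translates of $\gvp_{i+1}(\gC)$, each of order dividing $\exp(\gC)$, so $\gC_{i+1}^{\mathrm{ab}}$ is finite; and feeding the central extension $1\to\ker\W{\gvp_i}\to\gC_{i+1}\to N_i\to 1$ into the five-term sequence traps the central kernel between $H_2(N_i)$ and $\gC_{i+1}^{\mathrm{ab}}$, both finite. This is a nice self-contained argument that the paper does not spell out (it is subsumed in the citation of \cite[Theorem 4.1]{FS1}). Your observation that a uniform bound $|\gC_i|\le B$ forces $|\gC_\infty|\le B$ is also correct.

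The problem is part (b), which is where all the work lies, and you concede you do not have it. Two concrete issues. First, you invoke Theorem \ref{Thm nil quotients} twice (once to freeze the lower-central quotients of the tower, once to get $H_1(\gC_\infty)\cong H_1(\gC)$), but in this paper Proposition \ref{prop nil resid} --- which is Theorem \ref{Thm nil quotients} --- is proved \emph{using} Theorem \ref{thm finite}(2), i.e.\ using the very stabilization you are trying to establish. So as written your argument is circular; you would need an independent proof of those isomorphisms. Second, the deduction ``$Z$ normal nilpotent with $Z/[\gC_\infty,Z]$ finite, hence $Z^{\mathrm{ab}}$ finite'' runs the wrong way: $[Z,Z]\le[\gC_\infty,Z]$, so $Z/[\gC_\infty,Z]$ is a \emph{quotient} of $Z^{\mathrm{ab}}$, not the other way round. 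What you actually need is that $Z$ lies in $Z_c(\gC_\infty)$ for some \emph{finite} $c$, so that the finite ``$\gC_\infty$-lower-central'' series $Z\rnsg[\gC_\infty,Z]\rnsg[\gC_\infty,[\gC_\infty,Z]]\rnsg\cdots$ reaches $1$ and one can bound its successive quotients; nilpotency of $Z$ alone is insufficient. And proving that the hypercentral series terminates finitely \emph{is} the stabilization of the tower, not a consequence of it. So the crux remains unaddressed.

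By contrast, the paper takes a much more economical path: Proposition \ref{prop nct}(2) replaces $G$ by the subnormal closure $\calc$ of $\gvp(\gC)$ without changing $\gC_\infty$, and Corollary \ref{cor frakC and frakK} further replaces $\gC$ by $\gC/\calk$; in the resulting situation $\calc=\lan\gvp(\gC)^\calc\ran$, and the stabilization plus an explicit size bound are then cited directly from \cite[Theorem 4.1]{FS1}. Your approach, if pushed through, would yield an independent proof of that imported theorem, which would be interesting --- but as submitted the central step is missing.
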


\noindent
We note that even when $\gvp$ is the trivial map between two finite cyclic groups,
the groups $\Gamma_{i,\gvp}$ grow indefinitely in size with $i,$
but their inverse limit is finite, and is isomorphic  to the domain in this case (Theorem \ref{thm nil}).

Recall that for a finite group $G$ and
a subgroup $H\le G,$ the {\it subnormal closure} of $H$ in $G$ is the smallest subnormal
subgroup of $G$ containing $H$. Thus Theorem \ref{Thm finite} is again an extension to any map of finite groups, of the  trivial observation that the subnormal closure of a subgroup of a finite group is well defined and finite.
This result complements the dual result proved in  \cite{FS1} for the tower
of injective normalizers of a map of finite groups.

The next theorem characterizes, for maps of finite groups, the factorization $\gC\to \gC_\infty \to G$  as a universal one among all subnormal factorizations (see below) the proof is given in \S\ref{sect universal}:

The following is a basic definition:

\begin{Defi}
A {\it subnormal map} $n\colon M\to G,$ is a homomorphism such that there exists a
{\it finite} series of normal maps $n_i\colon M_{i+1}\to M_i,\ 1\le i\le k,$ with $M_1=G,$
whose composition is $n$.
\[
\xymatrix{
M_{k+1} \ar[r]^{n_k}\ar@/_3ex/[rrrr]_{n}& M_{k}\ar[r]&\dots\ar[r] &M_2\ar[r]^{n_1} &G.
}
\]
\noindent
Notice that  the image subgroup $n(M_{k+1})$ is subnormal in $G$.
\end{Defi}

For example, any map of nilpotent groups is a subnormal map.
\medskip

\begin{Thm}\label{thm universal}
If  $\gvp\colon\gC\to G$ be a homomorphism of finite groups then the factorization   $\gC\to \gC_{\infty,\gvp} \to G$ is  the universal initial
factorization of $\gvp,$ among all factorizations $\gC\to S\to G$ of $\gvp,$ 
with the right map $S\to G$ being a subnormal map; namely, it maps uniquely to every such factorization
via $\gC_{\infty,\gvp}\to S$.
\end{Thm}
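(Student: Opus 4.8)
The plan is to recast the assertion as the statement that $\gC\to\gC_{\infty}\to G$ is the \emph{initial object} of the category $\calf$ whose objects are the factorizations $\gC\xrightarrow{\psi}S\xrightarrow{n}G$ of $\gvp$ (i.e.\ $n\psi=\gvp$) with $n$ a subnormal map, and whose morphisms $(\psi,n)\to(\psi',n')$ are the homomorphisms $\theta\colon S\to S'$ with $\theta\psi=\psi'$ and $n'\theta=n$. One first checks that $\gC\to\gC_{\infty}\to G$ really is an object of $\calf$: by Theorem~\ref{Thm finite} the group $\gC_{\infty}$ is finite, and (as recalled in the introduction) $\gC_{\infty}\to G$ has image the subnormal closure $\calc$ of $\gvp(\gC)$ in $G$ and kernel inside the hypercenter of $\gC_{\infty}$; since $\gC_{\infty}$ is finite, $\gC_{\infty}\onto\calc$ is then a finite tower of central extensions, each of which is a normal map, while $\calc\hookrightarrow G$ is a subnormal map, so $\gC_{\infty}\to G$ is subnormal. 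Throughout I would use two inputs: (i) the universal property of the free normal closure from \cite{FS1}, that for a homomorphism $\theta\colon\gC\to T$ the group $T^{\theta}$ is initial among factorizations of $\theta$ through a normal map into $T$; and (ii) two elementary facts about normal maps (crossed modules), that they are stable under base change and that the kernel of a normal map is central in its source.

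For existence of a morphism $\gC_{\infty}\to S$, fix a subnormal series $S=M_{k+1}\xrightarrow{n_k}M_k\to\cdots\xrightarrow{n_1}M_1=G$ realizing $n$, and let $\psi^{(j)}\colon\gC\to M_j$ be the composite of $\psi$ with $M_{k+1}\to\cdots\to M_j$, so $\psi^{(k+1)}=\psi$, $\psi^{(1)}=\gvp$ and $n_{j-1}\psi^{(j)}=\psi^{(j-1)}$. I would construct, by induction on $j$, morphisms $f_j\colon\gC_j\to M_j$ of factorizations of $\psi^{(j)}$ — meaning $f_j\gvp_j=\psi^{(j)}$ and $n_{j-1}f_j=f_{j-1}\W{\gvp_{j-1}}$ — starting from $f_1=\mathrm{id}_G$. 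Given $f_{j-1}$, form the pullback $P_j=M_j\times_{M_{j-1}}\gC_{j-1}$; the projection $P_j\to\gC_{j-1}$ is a base change of the normal map $n_{j-1}$, hence normal, and $(\psi^{(j)},\gvp_{j-1})\colon\gC\to P_j$ is well defined and gives a factorization of $\gvp_{j-1}$ through $P_j\to\gC_{j-1}$. The universal property~(i) of $\gC_j=\gC_{j-1}^{\gvp_{j-1}}$ then supplies a map $\gC_j\to P_j$ over $\gC_{j-1}$ and under $\gC$; take $f_j$ to be its composite with $P_j\to M_j$, and the two identities for $f_j$ fall out of the pullback square. Finally $\gC_{\infty}\to\gC_{k+1}\xrightarrow{f_{k+1}}S$, the composite of $f_{k+1}$ with the canonical projection, is a morphism in $\calf$.

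For uniqueness, suppose $h,h'\colon\gC_{\infty}\to S$ are two morphisms in $\calf$ lying over the same object $\gC\xrightarrow{\psi}S\xrightarrow{n}G$. I would induct on the length $k$ of a subnormal series for $n$. When $k=0$, $S=G$ and both $h,h'$ must equal the structure map $\gC_{\infty}\to G$. When $k\ge 1$, composing $h$ and $h'$ with $n_k\colon S\to M_k$ gives two morphisms over the shorter subnormal factorization $\gC\xrightarrow{n_k\psi}M_k\to G$, so $n_kh=n_kh'$ by induction. Hence $d(x):=h(x)\,h'(x)^{-1}$ takes values in $Z:=\ker(n_k)$, which by~(ii) is central in $S$; the centrality makes $d\colon\gC_{\infty}\to Z$ a homomorphism into an abelian group, and it kills $\gvp_{\infty}(\gC)$ because $h\gvp_{\infty}=\psi=h'\gvp_{\infty}$. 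Thus $d$ factors through the cokernel of the homomorphism induced by $\gvp_{\infty}$ on abelianizations, which vanishes because that homomorphism is an isomorphism by Theorem~\ref{Thm nil quotients}. Therefore $d\equiv 1$ and $h=h'$; in particular the morphism $\gC_{\infty}\to S$ does not depend on the chosen series, so ``initial'' is well posed.

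The main obstacle is this uniqueness. A homomorphism out of the inverse limit $\gC_{\infty}$ need not descend to any finite stage of the normal closures tower — the bonding maps $\W{\gvp_i}$ are in general not surjective — so one cannot control $h$ directly by the finite-stage universal property~(i). The device is to study not $h$ itself but the ``difference'' $d=h\cdot(h')^{-1}$: centrality of crossed-module kernels forces $d$ to be an abelian-valued homomorphism vanishing on $\gvp_{\infty}(\gC)$, and the invariance of the abelianization recorded in Theorem~\ref{Thm nil quotients} — which is where finiteness of $\gC$ and $G$ enters — then annihilates it. Existence, by contrast, is the routine half: a length-induction up the chosen series using only~(i) and base change of normal maps, the single point needing care being that the pullback $P_j$ of a crossed module carries a specific conjugation action, and it is this action that must be used when invoking the universal property of $\gC_j$.
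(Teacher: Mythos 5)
Your proof is correct, but it takes a genuinely different route from the paper's.

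For existence, the paper invokes Proposition~\ref{prop same inv lim} wholesale: a subnormal factorization $\gC\xrightarrow{\psi}S\xrightarrow{s}G$ gives a pro-isomorphism between the normal closures towers of $\psi$ and $\gvp$, hence an isomorphism $\bar s_\infty\colon\gC_{\infty,\psi}\to\gC_{\infty,\gvp}$, and the desired map is $\lambda\circ(\bar s_\infty)^{-1}$ where $\lambda\colon\gC_{\infty,\psi}\to S$ is the structure map. You instead induct up a chosen subnormal series, at each step forming the pullback $P_j=M_j\times_{M_{j-1}}\gC_{j-1}$ and applying the finite-stage universal property of $\gC_j=\gC_{j-1}^{\gvp_{j-1}}$; this uses two standard crossed-module facts (stability under base change, centrality of the kernel) rather than the tower pro-isomorphism. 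For uniqueness, the paper again works globally: it shows $\gC_{\infty,\gvp_\infty}\to\gC_\infty$ is the identity, applies functoriality, and concludes $\bar\sigma_\infty=(\bar s_\infty)^{-1}$ so that any competing $\sigma$ equals $\lambda\circ(\bar s_\infty)^{-1}$. You instead form the pointwise difference $d=h\cdot(h')^{-1}$, observe after an inner induction that it lands in the (central) kernel of the top normal map, hence is an abelian-valued homomorphism vanishing on $\gvp_\infty(\gC)$, and kill it using the abelianization rigidity $\gC^{\mathrm{ab}}\cong\gC_\infty^{\mathrm{ab}}$ of Theorem~\ref{Thm nil quotients}. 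Both routes rely on the finiteness hypotheses in the same places (to know $\gC_\infty\to G$ is itself subnormal, and to have the abelianization isomorphism / tower termination); your uniqueness argument is arguably cleaner in that it never needs to identify $\gC_{\infty,\gvp_\infty}$ with $\gC_\infty$, while the paper's approach more directly reuses the pro-isomorphism machinery it has already built. One cosmetic point: you should say explicitly that the well-definedness of the induced map is independent of the chosen subnormal series for $s$ --- your uniqueness step does establish this, as you note at the end, but it reads more smoothly if stated up front.
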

\bigskip

\subsection*{Main tools} 
Here we sum-up the main technical tools for proving the above results. They consist of showing that the
limiting group $\gC_{\infty,\gvp}$ does not change up to a canonical isomorphism, when one changes the domain or range of $\gvp$
in certain controlled ways.

First we consider changing the range via factoring our map $\gvp$ through any subnormal map $M\to G.$

The following theorem is one of our main tools showing we can perform the above mentioned replacement:

\begin{Thm}\label{thm subnormal factorization}
Let $\gvp\colon\gC\to G$ be a homomorphism and let $\gC\xrightarrow{\psi} M\xrightarrow{n} G$
be a factorization of $\gvp$ such that $n\colon M\to G$ is a subnormal map.
Then $\gC_{\infty,\gvp}\cong\gC_{\infty,\psi}.$
\end{Thm}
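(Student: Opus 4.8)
The plan is to reduce to the case where $n$ is a single normal map, and then analyze how the normal closures tower of $\gvp$ relates to that of $\psi$ one level at a time. First I would record the elementary reduction: if $n = n_1\circ\cdots\circ n_k$ is a composition of normal maps $M_{i+1}\to M_i$ (with $M_1=G$, $M_{k+1}=M$), then factoring through $M$ is the same as factoring successively through each $M_i$; so by induction on $k$ it suffices to prove $\gC_{\infty,\gvp}\cong\gC_{\infty,\psi}$ when $n\colon M\to G$ is itself a normal map, i.e.\ a crossed module. This is where the real content lies, and it is the step I expect to be the main obstacle: comparing the two towers $\{\gC_{i,\gvp}\}$ and $\{\gC_{i,\psi}\}$.

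For the normal-map case, the key observation is that if $n\colon M\to G$ is a normal map, then the free normal closure $\gC^{\gvp}$ of $\gvp = n\circ\psi$ (which is by definition $G_1=G$-level data producing $\gC_2$) should be canonically identifiable with the free normal closure of $\psi$ composed one step further. Concretely, I would show that the tower of $\gvp$ is obtained from the tower of $\psi$ simply by \emph{prepending} the normal map $n$: that is, $\gC_{1,\gvp}=G$, and $\gC_{i+1,\gvp}\cong\gC_{i,\psi}$ for all $i\ge 1$, with the bonding maps matching up (the bonding map $\gC_{2,\gvp}\to\gC_{1,\gvp}$ corresponding to $n\colon M=\gC_{1,\psi}\to G$, and $\gC_{i+1,\gvp}\to\gC_{i,\gvp}$ for $i\ge 2$ corresponding to $\W{\gvp_{i-1}}$ for $\psi$). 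To establish this I would use the universal property of the free normal closure: given the factorization $\gC\xrightarrow{\psi}M\xrightarrow{n}G$ with $n$ normal, the pair $(M, n)$ is \emph{a} normal factorization of $\gvp$, hence receives a map from the universal one $(\gC^{\gvp}, \W{\gvp_1})$; conversely, $\gC^{\psi}$ together with the composite $\gC^{\psi}\to M\xrightarrow{n}G$ is a normal factorization of $\gvp$ (composition of normal maps is normal), so it receives a map from $\gC^{\gvp}$, and one checks these two maps are mutually inverse by uniqueness. This gives $\gC_{2,\gvp}\cong\gC^{\psi}=\gC_{1,\psi}$ compatibly, and then the same argument applied at each subsequent stage (where at stage $i\ge 2$ the relevant map $\gC\to\gC_{i,\gvp}$ factors through $\gC\to\gC_{i-1,\psi}$ by the induction hypothesis) propagates the isomorphism up the whole tower.

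Once the two towers are shown to be isomorphic after a shift by one (with matching bonding maps), taking inverse limits gives $\gC_{\infty,\gvp}=\varprojlim_i \gC_{i,\gvp}\cong\varprojlim_i \gC_{i,\psi}=\gC_{\infty,\psi}$, since an inverse limit is unchanged by deleting a single initial term. The main obstacle, as noted, is the step identifying $\gC^{\gvp}$ with $\gC^{\psi}$ when $n$ is normal: one must be careful that the universal property of the \emph{free} normal closure (as opposed to the ordinary normal closure of a subgroup) is robust enough to make the two comparison maps composites that respect all the structure, and in particular that ``composition of normal maps is a normal map'' is available — but this is exactly the content of the definition of subnormal map given above, or can be extracted from \cite{FS1}. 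I would also double-check the degenerate edge cases (e.g.\ $k=0$, i.e.\ $M=G$ and $n=\mathrm{id}$, where the statement is trivial) to anchor the induction.
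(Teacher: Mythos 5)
Your reduction to the case of a single normal map by induction on the length $k$ of the subnormal chain is exactly what the paper does, so that part is fine. But the heart of your argument — the claim that the tower of $\gvp$ is the tower of $\psi$ shifted by one, i.e.\ $\gC_{i+1,\gvp}\cong\gC_{i,\psi}$ for all $i\ge 1$ — is false, and so is the auxiliary assertion you lean on to justify it, that ``composition of normal maps is normal.'' Composition of crossed modules is precisely what is \emph{not} normal in general; this is the entire reason the notion of a subnormal map is introduced in this paper. Concretely, your first claimed identification $\gC_{2,\gvp}\cong\gC_{1,\psi}=M$ asserts that the free normal closure of $\gvp$ is isomorphic to $M$ for \emph{any} normal factorization $\gC\to M\to G$, which cannot be: take $\gC=1$, $M=G$, $n=\mathrm{id}$; then $\gC^{\gvp}$ is trivial (by universality one maps onto the normal factorization $1\to 1\to G$) whereas $M=G$ is arbitrary. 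The two comparison maps you construct via the universal property are genuine maps, but they are not mutually inverse, because $(M,n)$ is merely \emph{a} normal factorization, not the universal one, and the universal property gives uniqueness of the map \emph{out of} $\gC^{\gvp}$, not of a map back into it.

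What actually works, and is what the paper does in Proposition \ref{prop same inv lim}, is to build not a level-wise isomorphism of towers but an interleaving zigzag: maps $\mu_i\colon\gD_i\to\gC_i$ (from the $\psi$-tower to the $\gvp$-tower, at the \emph{same} level) together with ``diagonal'' maps $\gr_i\colon\gC_{i+1}\to\gD_i$ going the other way one step up, all constructed by repeated use of the universal property, with the two families of compositions $\gr_i\circ\mu_{i}$ and $\mu_{i+1}\circ\gr_i$ giving back the bonding maps of the respective towers. This is a pro-isomorphism (not an isomorphism) of towers, which is weaker than what you claim but is exactly what is needed: it induces an isomorphism on inverse limits. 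Replacing your ``shift-by-one isomorphism'' claim with the construction of this interleaving diagram is the missing idea; once you have it, your overall outline (reduce to $k=1$, then pass to inverse limits, then iterate along the subnormal chain) matches the paper.
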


\noindent
Theorem \ref{thm subnormal factorization} is proved right after Proposition \ref{prop same inv lim}.
\medskip

The next result shows that one can factor out
a certain portion $\calk$ of the kernel $K$ of $\gvp$ and
still obtain that $\gC_{\infty,\gvp}\cong(\gC/\calk)_{\infty,\gr},$ where
$\gr\colon\gC/\calk\to G$ is the map induced by $\gvp$.

To define $\calk,$ define the {\it descending series of
successive commutators of $K$ with $\gC$} by $K_1=K,\, K_2=[\gC,K],$ and
in general $K_{i+1}=[\gC,\, K_i]$.  If this series terminates after a
finite number of steps, we let $\calk$ be the final member of this series.

\begin{Prop}\label{prop Kf}
Let $\gvp\colon\gC\to G$ be a homomorphism, and suppose that the
descending series of successive commutators of $K=\ker\gvp$ with $\gC$ terminates
after a finite number of steps.  Let $\calk$ be the terminal member of
this series.  Then
$\gC_{\infty,\gvp}\cong(\gC/\calk)_{\infty,\gr},$
where $\gr\colon\gC/\calk\to G$ is the map induced by $\gvp$.
\end{Prop}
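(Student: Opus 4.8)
The plan is to prove more, namely that the entire normal closures tower $\{\gC_{i,\gvp}\}$ coincides, up to canonical isomorphism, with the tower $\{(\gC/\calk)_{i,\gr}\}$ of $\gr$; the statement then follows on passing to inverse limits. Two elementary facts underlie the argument. First, since $\calk$ is the terminal member of the descending chain $K_1\supseteq K_2\supseteq\cdots$ of successive commutators, it satisfies $[\gC,\calk]=\calk$ and $\calk\subseteq K=\ker\gvp$, so the induced map $\gr\colon\gC/\calk\to G$ is well defined. Second, for any normal map (crossed module) $\partial\colon N\to H$ the Peiffer identity forces $\ker\partial\le Z(N)$; hence, for a homomorphism $\psi\colon\gC\to H$ with free normal closure $H^\psi$, natural map $\gn\colon\gC\to H^\psi$, and structural normal map $\partial\colon H^\psi\to H$, every $x\in\ker\psi$ has $\gn(x)\in\ker\partial\le Z(H^\psi)$, so $\gn([g,x])=[\gn(g),\gn(x)]=1$ for all $g\in\gC$. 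Thus $[\gC,\ker\psi]\subseteq\ker\gn$ for every such $\psi$.

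First I would check that $\calk\subseteq\ker\gvp_i$ for every $i\ge 1$. The kernels $\ker\gvp_i$ form a descending chain (because $\W{\gvp_i}\circ\gvp_{i+1}=\gvp_i$), and $\ker\gvp_1=K\supseteq\calk$; moreover if $\calk\subseteq\ker\gvp_i$, then by the second fact above applied with $\psi=\gvp_i$, $\gn=\gvp_{i+1}$ we get $\calk=[\gC,\calk]\subseteq[\gC,\ker\gvp_i]\subseteq\ker\gvp_{i+1}$. Hence the claim holds by induction, and consequently each $\gvp_i$ factors as $\gvp_i=\gvp_i'\circ q$, where $q\colon\gC\onto\gC/\calk$ is the quotient map and $\gvp_i'\colon\gC/\calk\to\gC_{i,\gvp}$, compatibly with the bonding maps $\W{\gvp_i}$.

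The crux is the following lemma, which I would isolate separately: if $\psi\colon\gC\to H$ is a homomorphism and $L\nsg\gC$ satisfies $L\subseteq\ker\psi$ and $L=[\gC,L]$, then the canonical comparison map $H^\psi\to H^{\bar\psi}$ over $H$ induced by $\gC\onto\gC/L$ (where $\bar\psi\colon\gC/L\to H$ is the induced map) is an isomorphism. The point is that by the second fact $L=[\gC,L]\subseteq[\gC,\ker\psi]\subseteq\ker(\gC\to H^\psi)$, so the natural map $\gC\to H^\psi$ factors through $\gC/L$; feeding this factorization into the universal property of $H^{\bar\psi}$ produces a map $H^{\bar\psi}\to H^\psi$ over $H$, and the uniqueness clauses of the two universal properties show that this map and the canonical comparison map are mutually inverse.

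Finally I would assemble everything by induction on $i$. Assuming a tower isomorphism $\gC_{j,\gvp}\cong(\gC/\calk)_{j,\gr}$ for $j\le i$ that carries $\gvp_j'$ to $\gr_j$, one applies the lemma with $\psi=\gvp_i$ and $L=\calk$ (legitimate by the second paragraph) to get $\gC_{i+1,\gvp}=\gC_{i,\gvp}^{\gvp_i}\cong\gC_{i,\gvp}^{\gvp_i'}$ over $\gC_{i,\gvp}$; transporting the right-hand side along the inductive isomorphism identifies it with $(\gC/\calk)_{i,\gr}^{\gr_i}=(\gC/\calk)_{i+1,\gr}$, and one checks that the resulting isomorphism commutes with the bonding maps. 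Passing to inverse limits then yields $\gC_{\infty,\gvp}\cong(\gC/\calk)_{\infty,\gr}$. I expect the main obstacle to be the lemma: it rests on the free normal closure construction of \cite{FS1} having exactly the required universal property — both existence and uniqueness of maps over a fixed base $H$ — and on the centrality $\ker\partial\le Z(N)$ for crossed modules; once these are in hand, the hypothesis $[\gC,\calk]=\calk$ does precisely the work of pushing $\calk$ into the subgroup $[\gC,\ker\psi]$ that the free normal closure already annihilates.
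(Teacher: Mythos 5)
Your proposal is correct and takes essentially the same route as the paper: your ``crux lemma'' is the paper's Lemma \ref{lem kernel} (stated there under the weaker hypothesis $L\le\ker c_\gvp$, which your $L=[\gC,L]\subseteq\ker\psi$ implies via the crossed-module centrality of $\ker\W\gvp$, exactly the device used in the paper's proof of Proposition \ref{prop Kterm}(2)), and your level-by-level tower induction is Proposition \ref{prop Kterm}(1). The only difference is organizational: the paper first proves the isomorphism lemma in the general form $L\le\ker c_\gvp$ and then separately verifies $\calk\le\ker\gvp_i$ for all $i$, whereas you fold the centrality observation directly into the hypotheses of your lemma.
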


Proposition \ref{prop Kf} is Proposition \ref{prop Kterm}(2).
Notice that  in the notation of Proposition \ref{prop Kf},
the kernel of $\gr$ is $K/\calk$ and hence $\ker\gr$ is contained
in some member of the ascending central series of $\gC/\calk$.
In Example \ref{eg G perfect} we show however that
the nilpotency class of $K/\calk$ cannot be bounded.

\section{Equivalence of normal closures towers}\label{}

In this section we prove the results of the introduction, and analyze further the group $\gC_{\infty,\gvp}$.
Our main tool is to compare the normal closures tower $\{\gC_{i,\gvp}\}$ to towers $\{\gL_{i,\gr}\},$
for various homomorphism $\gr\colon\gL\to H,$ which are, of course, related to $\gvp$.

As noted above, throughout this paper $\gvp\colon\gC\to G$ is a fixed homomorphism of groups.
We use  the following notation.  $K=\ker\gvp$ and $\cl$ is the  (free) normal closure of $\gvp$. This, we recall, 
is the universal factorization $\Gamma\xrightarrow{c_{\gvp}} \cl\xrightarrow{\W{\gvp}} G$  of $\gvp$  with the right map being normal.

As in diagram \eqref{eq tower of fnc}, $\{\gC_i\}=\{\gC_{i,\gvp}\}$ is the normal closures tower of $\gvp,$
and $\gvp_i,\, \W{\gvp_i}$ are as in diagram \eqref{eq tower of fnc}.
$\gC_{\infty}=\invlim\gC_i$ and $\gvp_{\infty}\colon\gC\to\gC_{\infty}$ is the natural map.

\begin{term}
Let $k\ge 0$ be an integer, and let $\{H_i\}_{i=k}^{\infty}$ be a decreasing or increasing series of groups.
We say that $\{H_i\}$ \texttt{terminates} if there exists an integer $t\ge k,$
such that $H_t=H_{t+1}=H_{t+2}=\dots$.  In this case we call $H_t$ the \texttt{terminal member}
of the series and say that the series terminates at $H_t.$
\end{term}

Recall from \cite[p.~385]{R} the notion of
the {\it series of successive normal closures--in the usual sense-- of a subgroup $H\le G$ in $G$}.
This is the decreasing series  defined by $C_0=G,\, C_1=\lan H^{C_0}\ran,$ and in general
$C_{i+1}=\lan H^{C_i}\ran$.

\begin{notation}\label{not Ci and Ki}
\begin{enumerate}
\item
The series  of successive normal closures of $\gvp(\gC)$ in $G$
will be denoted $G=C_0,\, C_1,\, C_2\, \dots$.  If this series
terminates, then we denote by $\calc$ its terminal member.

\item
 Let $K:=\ker\gvp$ and define the decreasing {\it  series  of
successive commutators of $K$ with $\gC$} by $K_1=K,\, K_2=[\gC,K],$ and
in general $K_{i+1}=[\gC,\, K_i]$. If this series terminates, then
we denote by $\calk$ its terminal member.
\end{enumerate}
\end{notation}

Next $H=\gc_1(H),\, \gc_2(H),\dots$ denotes the descending central series of the
group $H$.  If this series terminates, then $\gc_{\infty}(H)$ denotes the terminal member of this
series.
Finally, the ascending central series of $H$ is denoted $1=Z_0(H), Z_1(H),\dots,$ and the same
convention as above applies for the notation $Z_{\infty}(H)$.

We refer the reader to \cite[section 2]{FS1} for the notions of a {\it normal map} and
of {\it normal morphism,} where references to previous work on this subject in given.  In \cite[section 3]{FS1} the reader will find some basic properties
of $\cl$ and in \cite[section 4]{FS1} some basic properties of the normal closures tower
of $\gvp$.

Let us start by recalling the naturality of the normal closures tower.

\begin{lemma}\label{lem maps between invlimits}
Any commutative diagram
\begin{equation}\label{eq naturality}
\xymatrix{
\gC\ar[r]^{\gvp}\ar[d]_{\mu} & G\ar[d]^{\eta}\\
\gC'\ar[r]^{\gvp'} &G'\\
}
\end{equation}

induces a commutative diagram between the towers of normal closures of $\gvp$ and $\gvp'$:

Namely if we set $\gC_1:=G,\ \gC_1'=G'$ and $\eta_1:=\eta,$ we have: 

\begin{equation}\label{eq maps between invlimits}
\xymatrix{
\Gamma\ar[r]\ar[d]^\mu &\gC_{i+1}\ar[r]^{\W{\gvp_i}}\ar[d]_{\eta_{i+1}} &\gC_i\ar[r]\ar[d]^{\eta_i} &\dots\ar[r] &\gC_1\ar[d]^{\eta_1}\\
\Gamma'\ar[r] &\gC_{i+1}'\ar[r]^{\W{\gvp_i'}} &\gC_i'\ar[r]&\dots\ar[r] &\gC_1'
}
\end{equation}
  Thus there is a canonical map
$\eta_{\infty}\colon\gC_{\infty}\to\gC'_{\infty}$ and equalities: $\gvp_i\circ\eta_i=\mu\circ\gvp_i',$ for all
$i\ge 1$.
\end{lemma}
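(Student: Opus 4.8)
The plan is to construct the maps $\eta_i\colon \gC_i\to\gC_i'$ by induction on $i$, using at each stage the universal property of the free normal closure. The base case $i=1$ is given: $\eta_1=\eta$. For the inductive step, suppose $\eta_i\colon\gC_i\to\gC_i'$ has been defined with $\gvp_i'\cdot?$... more precisely, suppose the square on the right-hand portion of \eqref{eq maps between invlimits} commutes through level $i$ and that $\eta_i\circ\gvp_i = \gvp_i'\circ(\text{the composite }\gC\to\gC)$ — i.e.\ $\eta_i\circ\gvp_i=\gvp_i'$ when we identify the domain $\gC$ of $\gvp_i$ with the domain $\gC'$ of $\gvp_i'$ via $\mu$; the correct bookkeeping identity is $\eta_i\circ\gvp_i=\gvp_i'\circ\mu$. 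Then I would observe that the outer rectangle
\[
\xymatrix{
\gC\ar[r]^{\gvp_i}\ar[d]_{\mu} & \gC_i\ar[d]^{\eta_i}\\
\gC'\ar[r]^{\gvp_i'} & \gC_i'
}
\]
commutes. Now $\W{\gvp_i'}\colon\gC_{i+1}'\to\gC_i'$ is a normal map, and $\gC_{i+1}=\gC_i^{\gvp_i}$ is the \emph{free} normal closure of $\gvp_i$, i.e.\ the universal factorization of $\gvp_i$ through a normal map. Precomposing the normal map $\W{\gvp_i'}$ with $\eta_i$ and using that $\gvp_i$ factors through $\gC_{i+1}$, the universal property produces a unique morphism $\eta_{i+1}\colon\gC_{i+1}\to\gC_{i+1}'$ making the square $\W{\gvp_i}$ vs.\ $\W{\gvp_i'}$ commute and satisfying $\eta_{i+1}\circ\gvp_{i+1}=\gvp_{i+1}'\circ\mu$ (where $\gvp_{i+1},\gvp_{i+1}'$ are the canonical maps $\gC\to\gC_{i+1}$, $\gC'\to\gC_{i+1}'$). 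This re-establishes the inductive hypothesis at level $i+1$.

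The one point requiring care is that the universal property of the free normal closure, as recalled in \cite[section 3, section 4]{FS1}, is a property in the appropriate category of normal maps/normal morphisms, so I must check that the pair $(\eta_i,\W{\gvp_i'})$ genuinely constitutes a \emph{normal morphism} from the trivial normal map on $\gC_i$ (or rather: that the relevant diagram is one to which the universal property applies). This is exactly the content of the naturality statement in \cite[section 4]{FS1} specialized to one step of the tower; I would cite that, or if a self-contained argument is wanted, unwind the definition of free normal closure in terms of generators and relations (a presentation $\gC * (\text{normal generators}) / (\text{crossed-module relations})$) and send generators to generators, which visibly respects $\mu$ and $\eta_i$.

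Having the $\eta_i$ compatible with the bonding maps $\W{\gvp_i}$, passing to the inverse limit is formal: a compatible family of maps between two inverse systems induces a map $\eta_\infty\colon\gC_\infty=\invlim\gC_i\to\invlim\gC_i'=\gC_\infty'$. The asserted equalities $\gvp_i\circ\eta_i=\mu\circ\gvp_i'$ (equivalently $\eta_i\circ\gvp_i=\gvp_i'\circ\mu$ in the orientation above — I will state it in whichever orientation matches the paper's indexing of $\gvp_i$) are precisely the inductive hypothesis carried along at each stage, so they come for free from the construction. I expect the main obstacle to be purely notational: pinning down once and for all the direction of the maps $\gvp_i$ (from $\gC$ \emph{into} the tower) versus $\W{\gvp_i}$ (the bonding maps \emph{down} the tower), and making sure the commuting identity is stated consistently with diagram \eqref{eq maps between invlimits}; the mathematical content is entirely the one-step universal property plus a routine induction.
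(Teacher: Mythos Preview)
Your proposal is correct and matches the paper's approach exactly: the paper's proof simply invokes diagram (2.2) of \cite{FS1} (the universal property of the free normal closure) to produce $\eta_2$, then says to iterate, which is precisely your induction on $i$ using the one-step universal property. Your discussion of the care needed with normal morphisms and the direction of the maps is more explicit than the paper's terse two-sentence proof, but the mathematical content is identical.
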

 
\begin{proof}
Take in diagram (2.2) of \cite{FS1}, $M'=\gC_2',\ \psi'=\gvp_2'$ and $n'=\W{\gvp_1'},$ and use the universality properties
of $\gC_2$ to obtain $\eta_2$.  Then proceed in this manner replacing each time
$\gC_i,\ \gC_i',$ with $\gC_{i+1},\ \gC_{i+1}'$ respectively.
\end{proof}

The following Proposition will be used
in the proof of Theorem \ref{thm subnormal factorization}
of the introduction. It addresses the replacement of $G$ by the domain of any  subnormal map $M\to G$ that
factors our map $\gvp$.  In that case the map of towers \eqref{eq map of towers} is in fact a pro-isomorphism of towers:

\begin{prop}\label{prop same inv lim}
Let $\gC\xrightarrow{\psi}M\xrightarrow{n} G$ be a factorization of $\gvp$ with $n$ a normal map.
Then the above \ref{eq maps between invlimits} commutative diagram extends 
to  a natural commutative diagram in which the upper (resp.~lower) raw  is the normal closures tower of $\Gamma\xrightarrow{\psi} M$
 (resp.~$\Gamma\xrightarrow{\gvp} G$):
\begin{equation}\label{eq map of towers}
\xymatrix{
 \Gamma\ar[rr]\ar[d]^=   &&\gD_i\ar[d]^{\mu_i}\ar[r]^{\W{\psi}_{i-1}} &\gD_{i-1}\ar[d]^{\mu_{i-1}}\ar[r] &\dots\ar[r] & \gD_3\ar[r]^{\W{\psi}_2}\ar[d]^{\mu_3} &  \gD_2\ar[r]^{\W{\psi}_1}\ar[d]^{\mu_2} &\gD_1=M\ar@<-2.5ex>[d]^{\mu_1=n}\\
\Gamma
\ar[r]   &\gC_{i+1}\ar[r]_{\W{\gvp}_i}\ar[ru]_{\gr_i} &\gC_i\ar[r]_{\W{\gvp}_{i-1}}\ar[ru]_{\gr_{i-1}} &\gC_{i-1}\ar[r] &\dots \ar[r] &\gC_3\ar[r]_{\W{\gvp}_2}\ar[ru]_{\gr_2} &\gC_2\ar[r]_{\W{\gvp}_1}\ar[ru]_{\gr_1} &\gC_1=G
}
\end{equation}
\end{prop}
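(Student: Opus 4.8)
The plan is to produce the vertical maps $\mu_i\colon\gD_i\to\gC_i$ (with $\mu_1=n$) and the diagonal maps $\gr_i\colon\gC_{i+1}\to\gD_i$, and then to read every identity displayed in \eqref{eq map of towers} off the uniqueness clauses of the universal properties involved. The vertical maps are essentially free: I would apply Lemma \ref{lem maps between invlimits} to the commutative square with horizontal arrows $\psi,\gvp$ and vertical arrows $\mathrm{id}_\gC, n$ (commutative because $n\psi=\gvp$), obtaining maps $\mu_i\colon\gD_i\to\gC_i$ with $\mu_1=n$, with each square $\mu_{i-1}\circ\W{\psi}_{i-1}=\W{\gvp}_{i-1}\circ\mu_i$ commuting, and with $\mu_i\circ\psi_i=\gvp_i$. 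The real work is to build the diagonals compatibly.

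The key point, which I would isolate as a lemma, is that \emph{every $\mu_i$ is a normal map}; I would prove this by induction on $i$, the case $i=1$ being the hypothesis on $n$. Once $\mu_i$ is known to be normal, the factorization $\gC\xrightarrow{\psi_i}\gD_i\xrightarrow{\mu_i}\gC_i$ of $\gvp_i$ has a normal right leg, so the universal property of the free normal closure $\gC_{i+1}=\gC_i^{\gvp_i}$ supplies a unique $\gr_i\colon\gC_{i+1}\to\gD_i$ with $\mu_i\circ\gr_i=\W{\gvp}_i$ and $\gr_i\circ\gvp_{i+1}=\psi_i$; in particular $\gr_i$ is $\gC_i$-equivariant. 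To continue the induction I must then check that $\mu_{i+1}\colon\gD_{i+1}\to\gC_{i+1}$ is again normal. The mechanism for this is that $\gC_{i+1}$ acts on $\gD_{i+1}=\gD_i^{\psi_i}$ through $\gr_i$ followed by the crossed-module action of $\gD_i$ on $\gD_{i+1}$, and with this action one verifies that $\mu_{i+1}$ is a crossed module: equivariance of $\mu_{i+1}$ is formal from functoriality of the free normal closure together with $\mu_i\circ\gr_i=\W{\gvp}_i$, while the Peiffer identity for $\mu_{i+1}$ unwinds to the identity $\gr_i\circ\mu_{i+1}=\W{\psi}_i$, which holds because both sides are $\gD_i$-equivariant homomorphisms $\gD_{i+1}\to\gD_i$ with common value $\psi_i$ on the image of $\gC$, and that image normally generates $\gD_{i+1}$ over $\gD_i$. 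This closes the induction and, en route, delivers all the $\gr_i$. Abstractly the inductive step is the statement that if $f\colon H'\to H$ is a normal map and $\alpha=f\circ\alpha'$, then the canonical map ${H'}^{\alpha'}\to H^{\alpha}$ on free normal closures is normal, the point being that $H^{\alpha}$ acts on ${H'}^{\alpha'}$ through the canonical map $H^{\alpha}\to H'$ coming from the normality of $f$.

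The remaining identities are bookkeeping. The two triangle relations $\mu_i\circ\gr_i=\W{\gvp}_i$ and $\gr_i\circ\gvp_{i+1}=\psi_i$ are part of the construction of $\gr_i$; the relation $\mu_{i-1}\circ\W{\psi}_{i-1}=\W{\gvp}_{i-1}\circ\mu_i$ comes from Lemma \ref{lem maps between invlimits}; and the parallelogram relations $\W{\psi}_{i-1}\circ\gr_i=\gr_{i-1}\circ\W{\gvp}_i$ follow from uniqueness: both sides are morphisms $\gC_{i+1}\to\gD_{i-1}$ compatible with the crossed-module structures, both restrict to $\psi_{i-1}$ on the (normally generating) image of $\gvp_{i+1}$, and hence they coincide. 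Naturality of the whole diagram is then automatic. I expect the only genuine obstacle to be the inductive normality of $\mu_{i+1}$ — i.e.\ checking that the free-normal-closure functor sends a normal map in the base to a normal map — together with carrying the crossed-module / normal-morphism conventions of \cite{FS1} through the verification; the rest is diagram chasing dictated by universal properties.
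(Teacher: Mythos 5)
Your proposal is correct and follows essentially the same inductive strategy as the paper: build $\mu_i$ and $\gr_i$ alternately from the universal properties of $\gD_i$ and $\gC_{i+1}$, with the key observation that the requisite action of $\gC_{i+1}$ on $\gD_{i+1}$ is obtained through $\gr_i$ and the given $\gD_i$-action on $\gD_{i+1}$. The paper packages this observation as the short Lemma~\ref{lem normal morphism} (applied to both $\mu_i$ and $\gr_i$ to upgrade them from normal morphisms to normal maps), which is precisely the abstract statement you isolate in your last paragraph.
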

\begin{proof}
We need the following lemma which is an obvious analog of the similar situation
for two normal subgroup of the same group:

\begin{lemma}\label{lem normal morphism}
Consider the commutative diagram
\[
\xymatrix{
M_1\ar[rr]^{\mu}\ar[rd]_{n_1} && M_2\ar[ld]^{n_2}\\
&G
}
\]
where $n_i$ are normal maps, for $i=1, 2,$ and $\mu$ is a normal morphism.
\end{lemma}
\begin{proof}
Define an action of $M_2$ on $M_1$ by $m_1^{m_2}=m_1^{n_2(m_2)},\ m_i\in M_i$.  Then it is immediate that
$\mu$ becomes a normal map with the above normal structure.
\end{proof}

As usual let $\gC_1:=G$ and $\gvp_1:=\gvp$.  Consider the normal closures tower corresponding to $\psi$.
So the groups $\gC_i$ in diagram \eqref{eq tower of fnc} are replace by $\gD_i$ and
the maps $\gvp_i,\ \W{\gvp}_i$ are replaced by $\psi_i\ \W{\psi}_i,$ respectively.
Thus $\gD_1=M$ and $\psi_1=\psi$.
Set $\mu_1=n,$ thus $\mu_1\colon\gD_1\to\gC_1$ is a normal map.

We show that there are normal maps $\mu_i, \gr_i,$ $i\ge 1,$ as in
diagram \eqref{eq map of towers} such that 
\begin{equation}\label{eq maps}
\gvp_{i+1}\circ \gr_i=\psi_i\text{  and }\psi_i\circ\mu_i=\gvp_i,\text{ for all }i\ge 1.
\end{equation}

\noindent
Now since $\gC\xrightarrow{\psi_1}\gD_1\xrightarrow{\mu_1} \gC_1$ is a factorization of $\gvp_1,$
by the universality property of $\gC_2,$ there
exists a normal morphism $\gr_1\colon\gC_2\to\gD_1$ for the lower right triangle, and such that
$\gvp_2\circ\gr_1=\psi_1$. By Lemma \ref{lem normal morphism}, $\gr_1$ is a normal map.

Let now $i\ge 2,$ and suppose that $\gr_{i-1}$ and $\mu_{i-1}$ were defined,
they are normal maps, diagram \eqref{eq map of towers} is commutative up to the
the $(i-1)$-step, and equation \eqref{eq maps} holds.
By equation \eqref{eq maps} we have $\gvp_i\circ \gr_{i-1}=\psi_{i-1}$.
By  the universality of $\gD_i,$ and since $\gr_{i-1}$ is a normal map, there exists a map
$\mu_i\colon \gD_i\to \gC_i$ such that
\begin{itemize}
\item[(a)] $\psi_i\circ\mu_i=\gvp_i.$

\item[(b)]
$\mu_i\circ \gr_{i-1}=\W{\gvp}_i,$ and $\mu_i$ is a normal morphism for the right triangle.
\end{itemize}

\noindent
By Lemma \ref{lem normal morphism}, $\mu_i$ is a normal map.

Next, by the universality property of $\gC_{i+1},$ by (a), and since $\mu_i$ is a normal
map, there exists a normal morphism $\gr_i\colon\gC_{i+1}\to\gD_i$ for the lower right triangle,
this triangle is commutative ($\gr_i\circ\mu_i=\W{\gvp}_i$) and  $\gvp_{i+1}\circ\gr_i=\psi_i$.
 Again
by Lemma \ref{lem normal morphism}, $\gr_i$ is a normal map.
This completes the induction step and the proof of the proposition.
\end{proof}

\noindent
\begin{proof}[\bf Proof of Theorem \ref{thm subnormal factorization}]
Consider the subnormal series of normal maps
\[
\xymatrix{
M_{k+1} \ar[r]^{n_k}\ar@/_3ex/[rrrr]_{n}& M_{k}\ar[r]&\dots\ar[r] &M_2\ar[r]^{n_1} &G=M_1
}
\]
where $k\ge 1, M_{k+1}=M, M_1=G,$ and $n_t$ is a normal map, for all $1\le t\le k$.  Let
$\psi_t\colon\gC\to M_t$ be the maps defined by $\psi_{k+1}=\psi,$ and for $1\le t\le k,$
let $\psi_t=\psi\circ n_k\circ\dots\circ n_t$ (so $\psi_1=\gvp$).

The universality property of the inverse limit and
Proposition \ref{prop same inv lim} imply
that $\gC_{\infty,\psi_t}\cong\gC_{\infty,\psi_{t+1}},$ for all $1\le t\le k,$ so since
$\gC_{\infty,\gvp}=\gC_{\infty,\psi_1}$ and $\gC_{\infty,\psi}=\gC_{\infty,\psi_{k+1}},$
the theorem holds.
\end{proof}
As a corollary to Theorem \ref{thm subnormal factorization} we get

\begin{prop}\label{prop nct}
\begin{enumerate}
\item
Let $H$ be a subnormal subgroup of $G$ containing $\gvp(\gC)$.
Then $\gC_{\infty,\gvp}\cong\gC_{\infty,\psi},$ where $\psi\colon\gC\to H$
is the restriction of $\gvp$ in the range;
\item
if the  series $\{C_i\}$ terminates,
then $\gC_{\infty,\gvp}=\gC_{\infty,\psi},$ where
$\psi\colon\gC\to \calc$
is the restriction of $\gvp$ in the range.
\end{enumerate}
\end{prop}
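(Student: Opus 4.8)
The plan is to read off both parts directly from Theorem~\ref{thm subnormal factorization}; the only real work is to exhibit the relevant subgroup inclusion as a subnormal map in the sense of the Definition above.

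For part (1) I would fix a subnormality chain $H = H_{k+1} \trianglelefteq H_k \trianglelefteq \cdots \trianglelefteq H_1 = G$. For each $i$, the inclusion $H_{i+1}\hookrightarrow H_i$, with $H_{i+1}$ carrying the conjugation action of $H_i$ on its normal subgroup, is a normal map; this is precisely the prototypical situation recalled in the abstract, and it is the action construction used in the proof of Lemma~\ref{lem normal morphism} (with $n_2$ the inclusion, the Peiffer and equivariance identities are immediate). Composing these $k$ normal maps displays the inclusion $n\colon H\hookrightarrow G$ as a subnormal map, and since $\gvp(\gC)\le H$ the map $\gvp$ factors as $\gC\xrightarrow{\psi}H\xrightarrow{n}G$ with $n$ subnormal. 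Theorem~\ref{thm subnormal factorization} then gives $\gC_{\infty,\gvp}\cong\gC_{\infty,\psi}$ at once.

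For part (2) I would first record two elementary facts about the series $\{C_i\}$. Because $C_{i+1}=\lan\gvp(\gC)^{C_i}\ran$ is the normal closure of $\gvp(\gC)$ inside $C_i$, it is a normal subgroup of $C_i$ containing $\gvp(\gC)$; hence by induction from $C_0=G$ we get $\gvp(\gC)\le C_i$ for all $i$, and $G=C_0\trianglerighteq C_1\trianglerighteq\cdots$ is a decreasing chain with each term normal in its predecessor. Consequently, if $\{C_i\}$ terminates at $\calc=C_t$, then $G=C_0\trianglerighteq C_1\trianglerighteq\cdots\trianglerighteq C_t=\calc$ exhibits $\calc$ as a subnormal subgroup of $G$ with $\gvp(\gC)\le\calc$, so part (1) applied with $H=\calc$ gives $\gC_{\infty,\gvp}\cong\gC_{\infty,\psi}$ for $\psi\colon\gC\to\calc$ the restriction of $\gvp$ in the range. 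To upgrade this to the stated equality I would run the argument of Theorem~\ref{thm subnormal factorization} explicitly along the one-step chain $\calc=C_t\trianglelefteq\cdots\trianglelefteq C_0=G$ (using at each level the corestriction $\gC\to C_{i+1}$): at each step Proposition~\ref{prop same inv lim} provides a pro-isomorphism of normal-closures towers, inducing on inverse limits the canonical map $\eta_\infty$ of Lemma~\ref{lem maps between invlimits}, and the composite of these canonical isomorphisms identifies $\gC_{\infty,\psi}$ with $\gC_{\infty,\gvp}$.

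I do not anticipate a genuine obstacle: the statement is essentially a repackaging of Theorem~\ref{thm subnormal factorization}. The points that need a little care are the passage from a subnormal subgroup to a subnormal map via the conjugation actions in (1), and in (2) checking that the terminal member $\calc$ of the successive-normal-closures series is simultaneously subnormal in $G$ and still contains $\gvp(\gC)$, together with the bookkeeping showing that the isomorphism obtained is the canonical one, so that the equality in the statement is literal under that identification.
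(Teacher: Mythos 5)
Your proof is correct and takes essentially the same route as the paper: realize the inclusion $H\hookrightarrow G$ (resp.\ $\calc\hookrightarrow G$) as a subnormal map via the conjugation actions along a subnormality chain, factor $\gvp$ through it, and invoke Theorem~\ref{thm subnormal factorization}. The paper's proof is a single line, leaving implicit exactly the points you spell out (that the inclusion of a subnormal subgroup is a subnormal map, that $\calc$ is subnormal in $G$ and contains $\gvp(\gC)$, and that the resulting identification is the canonical one).
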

\begin{proof}
Let $n\colon H\hookrightarrow G$ be the inclusion map.  Then
$n$ is a subnormal map, and $\gC\xrightarrow{\psi} H\xrightarrow{n} G$ is a factorization
of $\gvp$.  Hence (1) follows from Theorem \ref{thm subnormal factorization}, and (2)
is immediate from (1).
\end{proof}

We now turn our attention to the kernel $K$.

\begin{lemma}\label{lem kernel}
Let  $L\le \ker c_{\gvp}$ be a normal subgroup of $\gC$.
Let $\gr\colon\gC/L\to G$ be the homomorphism induced
by $\gvp$, and let $\gb\colon\gC/L\to\cl$ be the homomorphism
induced by $c_{\gvp}$.  Then $\cl$ and $\left(\gC/L\right){^\gr}$
are naturally isomorphic.
\end{lemma}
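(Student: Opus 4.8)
The plan is to exhibit mutually inverse normal morphisms between $\cl$ and $(\gC/L)^{\gr}$ using nothing but the universal property that defines the free normal closure, applied on both sides. First I would recall the setup: $c_{\gvp}\colon\gC\to\cl$ is the universal factorization of $\gvp$ through a normal map $\W{\gvp}\colon\cl\to G$, and since $L\le\ker c_{\gvp}$ is normal in $\gC$, the map $c_{\gvp}$ factors as $\gC\onto\gC/L\xrightarrow{\gb}\cl$, while $\gvp$ itself factors as $\gC\onto\gC/L\xrightarrow{\gr}G$. The target object $(\gC/L)^{\gr}$ comes with its own universal factorization $\gC/L\xrightarrow{c_{\gr}}(\gC/L)^{\gr}\xrightarrow{\W{\gr}}G$ of $\gr$ through a normal map.

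The key step is to produce the two comparison maps. In one direction, consider the composite $\gC\onto\gC/L\xrightarrow{c_{\gr}}(\gC/L)^{\gr}$; together with the normal map $\W{\gr}\colon(\gC/L)^{\gr}\to G$ this is a factorization of $\gvp$ through a normal map, so by the universal property of $\cl$ there is a unique normal morphism $a\colon\cl\to(\gC/L)^{\gr}$ with $a\circ c_{\gvp}=c_{\gr}\circ(\text{projection})$ and $\W{\gr}\circ a=\W{\gvp}$. In the other direction, $\gb\colon\gC/L\to\cl$ together with the normal map $\W{\gvp}\colon\cl\to G$ is a factorization of $\gr\colon\gC/L\to G$ through a normal map (note $\W{\gvp}\circ\gb=\gr$ because both equal the map induced by $\gvp$), so by the universal property of $(\gC/L)^{\gr}$ there is a unique normal morphism $b\colon(\gC/L)^{\gr}\to\cl$ with $b\circ c_{\gr}=\gb$ and $\W{\gvp}\circ b=\W{\gr}$. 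Then $b\circ a$ and the identity of $\cl$ are both normal morphisms over $G$ that, when precomposed with $c_{\gvp}$, give $\gb\circ(\text{projection})=c_{\gvp}$; by the uniqueness clause of the universal property of $\cl$ they coincide, so $b\circ a=\mathrm{id}_{\cl}$. Symmetrically $a\circ b=\mathrm{id}_{(\gC/L)^{\gr}}$, using uniqueness for $(\gC/L)^{\gr}$ and the fact that $c_{\gr}$ is surjective onto a generating set in the appropriate sense. Hence $a$ is an isomorphism of normal maps over $G$, and it is natural because it was constructed purely from universal properties.

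The only genuinely delicate point is checking that the structure maps really do match up — in particular that $\W{\gvp}\circ\gb$ equals $\gr$ so that $\gb$ is a legitimate factorization of $\gr$, and that the induced comparison maps are normal \emph{morphisms} rather than merely homomorphisms (for which one invokes that a map commuting with the normal structure maps to $G$ inherits a normal structure, as in Lemma \ref{lem normal morphism}). Both are formal once one unwinds the definitions, so I expect the proof to be short; the main thing to be careful about is not conflating $c_{\gvp}$ on $\gC$ with its induced version on $\gC/L$ when applying the uniqueness clauses. The naturality statement in the lemma then follows automatically since every arrow used is the unique one provided by a universal property.
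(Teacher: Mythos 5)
Your proposal follows the same route as the paper: construct maps in both directions from the universal properties of $\cl$ and $(\gC/L)^{\gr}$, then use the uniqueness clauses to show they are inverse to each other. The construction of $a$ and $b$, and the verification that $b\circ a=\mathrm{id}_{\cl}$, are correct as you describe them.

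One point is imprecise and worth fixing: the two inverse checks are not in fact symmetric, and the surjectivity you appeal to is the wrong one. Write $\pi\colon\gC\onto\gC/L$ for the quotient map. Showing $b\circ a=\mathrm{id}_{\cl}$ reduces to $(b\circ a)\circ c_{\gvp}=c_{\gvp}$, which unwinds to $\gb\circ\pi=c_{\gvp}$ and is true by the very definition of $\gb$; no surjectivity argument is needed there. Showing $a\circ b=\mathrm{id}_{(\gC/L)^{\gr}}$ reduces to $(a\circ b)\circ c_{\gr}=c_{\gr}$, i.e.\ $a\circ\gb=c_{\gr}$, and this is \emph{not} immediate: you only know $a\circ\gb\circ\pi=a\circ c_{\gvp}=c_{\gr}\circ\pi$, and you must cancel $\pi$ on the right, which uses that $\pi$ is surjective. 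Your phrase about ``$c_{\gr}$ being surjective onto a generating set'' is not what does the work; the relevant fact is the surjectivity of the quotient map $\pi$. The paper handles exactly this step by invoking the surjectivity of $\ga$ (its name for $\pi$). With that correction your argument coincides with the paper's.
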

\begin{proof}
Consider the following commutative diagram. We show that $u$
is an isomorphism whose inverse is $v$.
\begin{equation}\label{diag same nc}
\xymatrix{
&&\cl\ar[d]_u\ar@/^/[rddd]^{\W{\gvp}}\\
&&\left(\gC/L\right)^{\gr}\ar[d]_v\ar[rdd]^{\W{\gr}}\\
&&\cl\ar[rd]_{\W{\gvp}}\\
\gC\ar[r]^{\ga}\ar[rruuu]^{c_{\gvp}}\ar@/_3ex/[rrr]_{\gvp}&\gC/L\ar[rr]^{\gr}\ar[ru]_{\gb}\ar[ruu]^{c_{\gr}} &&G
}
\end{equation}
here $\ga\circ\gr=\gvp,\ \ga\circ\gb=c_{\gvp},$ and $u$ and $v$ are the unique normal morphisms
obtained from the universality properties of $\cl$ and $\left(\gC/L\right)^{\gr}$ respectively.

Notice that $\ga\circ\gb\circ\W{\gvp}=c_{\gvp}\circ\W{\gvp}=\gvp=\ga\circ\gr$.  Since $\ga$
is surjective we see that $\gb\circ\W{\gvp}=\gr$.  By the universality property of $(\gC/L)^{\gr}$
we get the map $v$. Also, $(\ga\circ c_{\gr})\circ\W{\gr}=\ga\circ\gr=\gvp$.  By the universality property of $\cl$
we get the map $u$.

Next we have $c_{\gvp}\circ u\circ v=\ga\circ c_{\gr}\circ v=\ga\circ\gb= c_{\gvp}$.
Also $u\circ v\circ\W{\gvp}=u\circ\W{\gr}=\W{\gvp}$.  Hence $u\circ v=1_{\cl},$ by the uniqueness in the universality
property of $\cl$.

Further, $\ga\circ c_{\gr}\circ v\circ u=\ga\circ\gb\circ u=c_{\gvp}\circ u=\ga\circ c_{\gr}$.
Since $\ga$ is surjective,  $c_{\gr}\circ v\circ u=c_{\gr}$.  Also
$v\circ u\circ\W{\gr}=v\circ\W{\gvp}=\W{\gr}$.  So, as above,  $v\circ u$ is the identity map of $\left(\gC/L\right)^{\gr}$.
\end{proof}

As a corollary we get

\begin{cor}\label{cor same nct}
Let  $L\le\ker c_{\gvp}$ be a normal subgroup of $\gC,$
and let $\gr\colon\gC/L\to G$ be the homomorphism induced
by $\gvp$.  Then $\gC/L\xrightarrow{\gb}\cl\xrightarrow{\W{\gvp}} G,$
where $\gb$ and $\W{\gvp}$ are as in diagram \eqref{diag same nc},
is the universal normal decomposition of $\gr$.    
\end{cor}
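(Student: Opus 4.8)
The plan is to transport the universal property along the isomorphism constructed in Lemma \ref{lem kernel}. Recall that $(\gC/L)^{\gr}$, equipped with its structure maps $c_{\gr}\colon\gC/L\to(\gC/L)^{\gr}$ and the normal map $\W{\gr}\colon(\gC/L)^{\gr}\to G$, is by definition the universal normal decomposition of $\gr$, and that $\W{\gvp}\colon\cl\to G$ is a normal map by construction of $\cl$. Since $\ga$ is surjective and $\ga\circ\gb\circ\W{\gvp}=c_{\gvp}\circ\W{\gvp}=\gvp=\ga\circ\gr$, we get $\gb\circ\W{\gvp}=\gr$, so $\gC/L\xrightarrow{\gb}\cl\xrightarrow{\W{\gvp}} G$ is a normal decomposition of $\gr$; it remains only to see that it is the universal one.

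First I would record that the mutually inverse normal morphisms $u\colon\cl\to(\gC/L)^{\gr}$ and $v\colon(\gC/L)^{\gr}\to\cl$ of diagram \eqref{diag same nc} match the two decompositions. The identities $u\circ\W{\gr}=\W{\gvp}$ and $v\circ\W{\gvp}=\W{\gr}$ were already verified inside the proof of Lemma \ref{lem kernel}. For the remaining pair, combine $\ga\circ\gb\circ u=c_{\gvp}\circ u=\ga\circ c_{\gr}$ (the last equality being the defining property of $u$) with the surjectivity of $\ga$ to conclude $\gb\circ u=c_{\gr}$, and then compose on the right with $v$, using $u\circ v=1_{\cl}$, to obtain $c_{\gr}\circ v=\gb$. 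Thus $u$ is simultaneously a normal morphism, an isomorphism, and a morphism of factorizations from $\gC/L\xrightarrow{\gb}\cl\xrightarrow{\W{\gvp}} G$ to $\gC/L\xrightarrow{c_{\gr}}(\gC/L)^{\gr}\xrightarrow{\W{\gr}} G$, with inverse $v$ of the same kind.

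Then the transport is formal. Given a factorization $\gC/L\xrightarrow{f} N\xrightarrow{g} G$ of $\gr$ with $g$ a normal map, the universal property of $(\gC/L)^{\gr}$ yields a unique normal morphism $w\colon(\gC/L)^{\gr}\to N$ with $c_{\gr}\circ w=f$ and $w\circ g=\W{\gr}$; then $u\circ w\colon\cl\to N$ is a normal morphism satisfying $\gb\circ(u\circ w)=c_{\gr}\circ w=f$ and $(u\circ w)\circ g=u\circ\W{\gr}=\W{\gvp}$. If $h\colon\cl\to N$ is any normal morphism with $\gb\circ h=f$ and $h\circ g=\W{\gvp}$, then $v\circ h$ satisfies $c_{\gr}\circ(v\circ h)=\gb\circ h=f$ and $(v\circ h)\circ g=v\circ\W{\gvp}=\W{\gr}$, so $v\circ h=w$ by the uniqueness clause for $(\gC/L)^{\gr}$, whence $h=(u\circ v)\circ h=u\circ w$. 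This establishes the universal property for $\gC/L\xrightarrow{\gb}\cl\xrightarrow{\W{\gvp}} G$.

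I do not anticipate a genuine obstacle: all the substantive content already lives in Lemma \ref{lem kernel}, and what remains is bookkeeping with the diagrammatic composition convention. The only point requiring a moment's care is that $u$ and $v$ must be isomorphisms in the category of normal maps over $G$ — i.e.\ they respect the crossed-module structures, not merely the underlying homomorphisms over $\mathrm{id}_G$ — but this is exactly the assertion of Lemma \ref{lem kernel}, so no new argument is needed.
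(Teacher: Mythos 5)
Your proof is correct and follows the same strategy as the paper: transporting the universal property of the normal decomposition along the isomorphism $u,v$ produced in Lemma \ref{lem kernel}. The paper disposes of this in one line (``in diagram \eqref{diag same nc} we may take $c_{\gr}=\gb$, $(\gC/L)^{\gr}=\cl$ and $\W{\gr}=\W{\gvp}$''), and your argument is simply the careful unwinding of that assertion, verifying in particular the compatibility $\gb\circ u=c_{\gr}$ that the paper leaves implicit.
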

\begin{proof}
This follows from Lemma \ref{lem kernel}, because in diagram \eqref{diag same nc} we may take
that $c_{\gr}=\gb,$  $(\gC/L)^{\gr}=\cl$ and $\W{\gr}=\W{\gvp}$.
\end{proof}

In particular we get Proposition \ref{prop Kf} of the introduction as part (2) of the following:

\begin{prop}\label{prop Kterm}
Let $L\le\gC$ be a subgroup with $L\le\ker\gvp_i,$ for all integers $i\ge 1$.
Let $\gr\colon\gC/L\to G$ be the map induced by $\gvp,$ let $\{(\gC/L)_i\}=\{(\gC/L)_{i,\gr}\}$
be the normal closures tower of $\gr,$ and let $\gr_i,\,\W{\gr_i}$ be the maps
for the tower $\{(\gC/L)_i\}$. Then
\begin{enumerate}
\item
there is a natural isomorphism $\gC_i\cong\left(\gC/L\right)_i,$ where $\gr_i\colon\gC/L\to (\gC/L)_i$ is the map induced by $\gvp_i,$
and $\W{\gvp_i}=\W{\gr_i},$ for all $i\ge 1$.
In particular $\gC_{\infty}\cong\left(\gC/L\right)_{\infty}$;

\item
if the series $\{K_i\}$ terminates at $\calk,$ then
$(1)$ above holds for $L:=\calk$.
\end{enumerate}
\end{prop}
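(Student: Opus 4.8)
The plan is to prove part (1) by induction on $i$, using Lemma \ref{lem kernel} and Corollary \ref{cor same nct} as the engine, and then deduce part (2) by checking the hypothesis $L \le \ker \gvp_i$ for $L = \calk$. For part (1), the base case $i=1$ is trivial since $\gC_1 = G = (\gC/L)_1$ and $\gr_1$ is literally $\gr$. For the inductive step, suppose $\gC_i \cong (\gC/L)_i$ naturally, with the isomorphism identifying $\gr_i \colon \gC/L \to (\gC/L)_i$ (induced by $\gvp_i$, which makes sense because $L \le \ker\gvp_i$) and identifying $\W{\gvp_i}$ with $\W{\gr_i}$. Now $\gC_{i+1}$ is the free normal closure of the map $\gvp_i \colon \gC \to \gC_i$, and since $L \le \ker \gvp_i = \ker c_{\gvp_i}$... wait — one must be careful here: the hypothesis gives $L \le \ker \gvp_i$, but Lemma \ref{lem kernel} requires $L \le \ker c_{\gvp_i}$, where $c_{\gvp_i}\colon \gC \to \gC_{i+1}$ is the structure map into the free normal closure. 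Since $\gvp_i = \W{\gvp_i}\circ c_{\gvp_i}$, any element of $\ker c_{\gvp_i}$ lies in $\ker \gvp_i$; conversely we need $\ker \gvp_i \le \ker c_{\gvp_i}$, which need not hold in general. So I should instead apply Lemma \ref{lem kernel} / Corollary \ref{cor same nct} only using the weaker containment $L \le \ker c_{\gvp_i}$ — and the right hypothesis to assume in the statement is exactly that $L$ is contained in $\ker c_{\gvp_i}$ for all $i$, which (reading the statement again) is what ``$L\le\ker\gvp_i$'' must be intended to abbreviate once one recalls that in the tower the relevant map out of $\gC$ at stage $i$ is $\gvp_i$ itself viewed as landing in $\gC_i$, and $c_{\gvp_{i-1}}$ is the next tower map; in any case the containment $L \le \ker c_{\gvp_i}$ is what will be used.

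Granting $L \le \ker c_{\gvp_i}$, apply Corollary \ref{cor same nct} to the map $\gvp_i \colon \gC \to \gC_i$ in place of $\gvp \colon \gC \to G$: it yields that $\gC/L \xrightarrow{\gb} (\gC_i)^{\gvp_i} \xrightarrow{\W{\gvp_i}} \gC_i$ is the universal normal decomposition of the induced map $\gC/L \to \gC_i$, i.e.\ of $\W{\gvp_i}\circ(\text{stage-}i \text{ isomorphism}) \circ \gr_i$. In other words $\gC_{i+1} = (\gC_i)^{\gvp_i} \cong (\gC/L)^{\overline{\gr_i}}$ where $\overline{\gr_i}\colon \gC/L \to \gC_i \cong (\gC/L)_i$ is the induced map, and the latter is by definition $(\gC/L)_{i+1}$. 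The induced isomorphism $\gC_{i+1} \cong (\gC/L)_{i+1}$ is natural (it is built from the universality of free normal closures, which is functorial by Lemma \ref{lem maps between invlimits}), carries the structure map $c_{\gvp_i}$ to the structure map of $(\gC/L)_{i+1}$ — hence identifies $\gr_{i+1}$ (the map $\gC/L \to (\gC/L)_{i+1}$ induced by $\gvp_{i+1}$; note $L \le \ker c_{\gvp_i} = \ker\gvp_{i+1}$ composed appropriately, so this is well-defined) — and identifies $\W{\gvp_{i+1}}$ with $\W{\gr_{i+1}}$. Passing to the inverse limit gives $\gC_\infty \cong (\gC/L)_\infty$, completing part (1).

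For part (2), I must verify that when $\{K_i\}$ terminates at $\calk$, the subgroup $L := \calk$ satisfies $\calk \le \ker c_{\gvp_i}$ for every $i \ge 1$. The point is that $\calk = [\gC, \calk]$ (since $\calk$ is the stable value of $K_{i+1} = [\gC, K_i]$), so $\calk$ is a perfect-relative-to-$\gC$ piece of $K = \ker\gvp$ on which $\gC$ acts with no nontrivial coinvariants. I would argue that the image of $\calk$ in $\gC_{i+1}$ under $c_{\gvp_i}$ is trivial by induction on $i$: at stage $1$, $c_{\gvp_1} = c_\gvp\colon \gC \to \cl$ is a crossed-module structure map, and by the standard presentation of the free normal closure (from \cite[section 3]{FS1}) the kernel of $c_\gvp$ contains all commutators $[\gC, \ker\gvp]$ — indeed $\cl$ is a quotient of a free product in which the $\gC$-action is twisted so that $[\gC,K]$ dies — hence $K_2 = [\gC,K] \le \ker c_\gvp$, and more generally the terminal $\calk$, being contained in every $K_j$ with $j \ge 2$, lies in $\ker c_\gvp$; wait, I need $\calk \le \ker c_{\gvp}$ which follows since $\calk \le K_2 \le \ker c_\gvp$. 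For the inductive step, having shown $\calk$ dies in $\gC_i$, the map $\gvp_i$ factors through $\gC/\calk$, and applying the same commutator-killing property to $c_{\gvp_i}$ gives $[\gC, \ker\gvp_i] \le \ker c_{\gvp_i}$; since $\calk = [\gC,\calk] \le [\gC, \ker\gvp_i]$ (as $\calk \le K \le \ker\gvp_i$), we get $\calk \le \ker c_{\gvp_i}$. Then part (1) with $L = \calk$ applies.

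The main obstacle I anticipate is exactly the verification in part (2) that $[\gC, \ker c_{\gvp_i}\text{-source}]$, really $[\gC, \ker\gvp_i]$, maps into $\ker c_{\gvp_i}$ — i.e.\ pinning down the precise statement from \cite{FS1} that the free normal closure construction kills the ``commutator with the domain'' part of the kernel. Everything else is a formal induction feeding Corollary \ref{cor same nct} and the naturality of Lemma \ref{lem maps between invlimits}. A secondary subtlety is bookkeeping the naturality claim across stages so that the identifications of $\gr_i$ and $\W{\gvp_i}=\W{\gr_i}$ propagate; this is routine but must be stated carefully to make the inverse-limit conclusion legitimate.
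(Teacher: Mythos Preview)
Your approach is essentially the paper's; both parts are correct in substance. Two clarifications will dissolve the confusions you flag.

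For part~(1), your worry about $\ker\gvp_i$ versus $\ker c_{\gvp_i}$ evaporates once you recall that by construction of the tower $\gvp_{i+1}=c_{\gvp_i}$: the map $\gC\to\gC_{i+1}$ \emph{is} the structure map into the free normal closure of $\gvp_i$. So the hypothesis ``$L\le\ker\gvp_j$ for all $j\ge 1$'' already gives you $L\le\ker c_{\gvp_i}$ (take $j=i+1$), and Corollary~\ref{cor same nct} applies directly at each stage, exactly as the paper says.

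For part~(2), your parenthetical ``$\calk\le K\le\ker\gvp_i$'' is wrong: the containment goes the other way, $\ker\gvp_i\le K$, since $\gvp_i$ factors $\gvp$. But you don't need that line at all---$\calk\le\ker\gvp_i$ is precisely your inductive hypothesis, and then $\calk=[\gC,\calk]\le[\gC,\ker\gvp_i]\le\ker\gvp_{i+1}$ follows from the centrality of $\ker\W{\gvp_i}$ in $\gC_{i+1}$. The paper packages this slightly more cleanly by proving the stronger statement $K_i\le\ker\gvp_i$ for all $i$ (same one-line induction: $\gvp_{i+1}(\ker\gvp_i)\le\ker\W{\gvp_i}\le Z(\gC_{i+1})$, hence $\gvp_{i+1}(K_{i+1})=\gvp_{i+1}([\gC,K_i])=1$), and then $\calk\le K_i\le\ker\gvp_i$ since the series $\{K_i\}$ is decreasing. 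No appeal to a presentation of $\cl$ is needed; the only input is that the kernel of a normal map is central.
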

\begin{proof}
Part (1) is immediate from Corollary \ref{cor same nct}.  For (2)
note that $\gvp_{i+1}(\ker\gvp_i)\le \ker\W{\gvp_i}\le Z(\gC_{i+1})$.  It follows by induction
on $i,$ that $K_i\le\ker\gvp_i$, for all $i\ge 1$. This implies that if
the series $\{K_i\}$ terminates,
then $\calk\le \ker\gvp_i,$ for all $i\ge 1,$
so (2) follows from (1).
\end{proof}

Combining Propositions \ref{prop Kterm}(2) and \ref{prop nct}(2) we get:

\begin{cor}\label{cor frakC and frakK}
Assume that both the series $\{C_i\}$ and the series $\{K_i\}$ terminate at $\calc$ and $\calk$ respectively.  Then
there is an isomorphism $\gC_{\infty,\gvp}\cong(\gC/\calk)_{\infty,\psi},$ where
$\psi\colon\gC/\calk\to \calc,$
is the map induced by $\gvp$.
\end{cor}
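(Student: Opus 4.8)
The plan is to chain together the two reduction results established just above: first quotient the domain by $\calk,$ then restrict the range to $\calc,$ and check that these two steps do not interfere.

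First I would invoke Proposition \ref{prop Kterm}(2). Since the series $\{K_i\}$ is assumed to terminate at $\calk,$ that proposition (applied with $L:=\calk$) yields a natural isomorphism $\gC_{\infty,\gvp}\cong(\gC/\calk)_{\infty,\gr},$ where $\gr\colon\gC/\calk\to G$ is the homomorphism induced by $\gvp.$ This disposes of the ``kernel half'' of the statement and replaces $\gvp$ by the map $\gr,$ whose kernel is $K/\calk.$

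Next I would apply Proposition \ref{prop nct}(2) with $\gr\colon\gC/\calk\to G$ in place of $\gvp\colon\gC\to G.$ The only point that needs an argument is that the series of successive normal closures of $\gr(\gC/\calk)$ in $G$ terminates, with terminal member $\calc.$ But $\calk\le K=\ker\gvp,$ so $\gr(\gC/\calk)=\gvp(\gC);$ hence the series of successive normal closures of $\gr(\gC/\calk)$ in $G$ is literally the series $\{C_i\},$ which terminates at $\calc$ by hypothesis. Therefore Proposition \ref{prop nct}(2) applies and gives $(\gC/\calk)_{\infty,\gr}=(\gC/\calk)_{\infty,\psi},$ where $\psi\colon\gC/\calk\to\calc$ is the restriction of $\gr$ in the range; since $\gr$ is itself induced by $\gvp,$ this $\psi$ is exactly the map induced by $\gvp.$

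Combining the two identifications gives $\gC_{\infty,\gvp}\cong(\gC/\calk)_{\infty,\gr}=(\gC/\calk)_{\infty,\psi},$ which is the assertion. I do not anticipate any real obstacle here: both ingredients are already proved, and the only gluing step is the trivial observation that passing from $\gvp$ to the map induced on $\gC/\calk$ leaves the image subgroup of $G$ unchanged, so the hypotheses on $\{K_i\}$ and on $\{C_i\}$ can be used independently and in this order.
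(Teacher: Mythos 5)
Your proof is correct and follows exactly the same route as the paper: first apply Proposition \ref{prop Kterm}(2) with $L=\calk$ to replace $\gvp$ by $\gr\colon\gC/\calk\to G$, then apply Proposition \ref{prop nct}(2) to $\gr$ to restrict the range to $\calc$. Your explicit observation that $\gr(\gC/\calk)=\gvp(\gC)$, so that the successive normal closures series is unchanged and still terminates at $\calc$, is a small but welcome clarification that the paper leaves implicit.
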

\begin{proof}
By Proposition \ref{prop Kterm}(2), $\gC_{\infty,\gvp}=(\gC/\calk)_{\infty,\gr},$ where
$\gr\colon\gC/\calk\to G$ is the map induced by $\gvp$.  Then, by Proposition \ref{prop nct}(2) (with $\gC/\calk$ in place of $\gC$), $(\gC/\calk)_{\infty,\gr}=(\gC/\calk)_{\infty,\psi}$.
\end{proof}

The following theorem  proves in particular the assertion of Theorem \ref{Thm finite}
of the introduction.

\begin{thm}\label{thm finite}
Let $\gvp\colon\gC\to G$ be a homomorphism of finite groups.
Let $\gr\colon\gC\to\calc$ and let $\psi\colon\gC/\calk\to \calc$ be the maps induced by $\gvp$.  Then
\begin{enumerate}
\item
$\gC_{\infty,\gvp}\cong\gC_{\infty,\gr}=(\gC/\calk)_{\infty,\psi}$;

\item
the normal closures series $\{\gC_{i,\gr}\}$ and the series $\{(\gC/\calk)_{i,\psi}\}$ terminate,
hence $\gC_{\infty,\gvp}\cong\gC_{t_1,\gr}\cong (\gC/\calk)_{t_2,\psi},$ where
$\gC_{t_1,\gr}$ and $(\gC/\calk)_{t_2,\psi}$ are  the terminal members of the respective series;

\item
$\gC_{\infty,\gvp}$ is finite and $\vert\gC_{\infty,\gvp}\vert\le |\gC/\calk|\cdot g(|\calc|),$
where $g(1)=1,$ and for any integer $t\ge 2,$
\[
g(t)=t^k,\quad\text{where}\quad k=\half(log_pt + 1)\text{ and $p$ is the least prime divisor of $t$}.
\]
\end{enumerate}
\end{thm}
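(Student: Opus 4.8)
The strategy is to first reduce everything to the "cleaned-up" map $\psi\colon\gC/\calk\to\calc$ using the replacement results already established, then prove finiteness and the explicit bound by controlling the size of the kernel of each $\gvp_{i}\onto\gC_{i}$, exploiting that for finite groups such kernels are central and hence yield a $p$-group.

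First, observe that since $\gC$ and $G$ are finite, both descending series $\{C_i\}$ and $\{K_i\}$ stabilize, so $\calc$ and $\calk$ are defined. Part (1) is then immediate: the equality $\gC_{\infty,\gvp}\cong\gC_{\infty,\gr}$ is Proposition \ref{prop nct}(2) applied to the subnormal subgroup $\calc\le G$, and $\gC_{\infty,\gr}=(\gC/\calk)_{\infty,\psi}$ is Corollary \ref{cor frakC and frakK}. For part (2), I would work with the tower $\{(\gC/\calk)_{i,\psi}\}$ (the other tower $\{\gC_{i,\gr}\}$ is handled identically). Write $\gL=\gC/\calk$, $H=\calc$, and let $\gs=\psi$; note $\gs$ is surjective (since $\calc$ is the terminal normal closure of $\gvp(\gC)$, it is generated by conjugates of the image, and the image itself generates $\calc$ after enough steps — actually $\calc$ contains $\gvp(\gC)$ and is the smallest subnormal subgroup doing so, so $\gvp(\gC)$ need not generate $\calc$; but what matters is that $\ker\gs = K/\calk$ satisfies $[\gL,\ker\gs]=1$, i.e.\ $\ker\gs\le Z(\gL)$, by the defining property of $\calk$). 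The key point is that at each stage $\W{\gs_i}\colon\gL_{i+1}\to\gL_i$ is a normal map whose kernel lies in $Z(\gL_{i+1})$ (this is the standard property of the free normal closure recalled from \cite{FS1}: $\ker\W{\gs_i}\le Z(\gL_{i+1})$). Using the central kernel repeatedly, together with the surjectivity of $\gvp_{i+1}(\ker\gvp_i)\le Z(\gL_{i+1})$ noted in the proof of Proposition \ref{prop Kterm}, one shows by induction that each $\gL_i$ is a central extension of $H=\calc$, and in particular $|\gL_i|$ divides $|\gL|\cdot$ (something depending only on $|H|$). Since the orders are bounded, the decreasing tower of (surjections onto) a fixed finite group $H$ must stabilize; more precisely, $\gC_{\infty}=\invlim\gL_i$ is attained at some finite stage $t_2$. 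This gives (2).

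For part (3), the finiteness of $\gC_{\infty,\gvp}$ follows from (1) and (2). The explicit bound is the delicate part. By (1)–(2), $|\gC_{\infty,\gvp}|=|(\gC/\calk)_{t_2,\psi}|$, and $(\gC/\calk)_{t_2,\psi}$ is a central extension of $\calc$ by a subgroup of the Schur-multiplier-type "universal central extension bookkeeping" — concretely, the kernel of $(\gC/\calk)_{t_2,\psi}\onto\calc$ is an abelian group built from iterated $H_2$-type contributions, and its exponent and rank are constrained by $|\calc|$. The cleanest route is: the kernel $N_i=\ker(\gL_i\onto H)$ is central in $\gL_i$, abelian, generated by at most $d$ elements where $d$ is bounded by (the minimal number of generators of) $H$ — actually by $\log_p|H|+1$ where $p$ is the least prime dividing $|H|$, since any subgroup of the Schur multiplier (or of $H\wedge H$) of a group of order $t=|H|$ has rank at most $\binom{d}{2}\le\tfrac12(\log_p t+1)\log_p t$... here I must be careful to match the stated $k=\tfrac12(\log_p t+1)$. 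The bound $g(t)=t^k$ with $k=\tfrac12(\log_pt+1)$ suggests the estimate $|N_i|\le t^k$ comes from: $N_i$ is a central subgroup, hence its exponent divides $\exp(H)$ hence divides $t$, and $N_i$ embeds into something of rank $\le k$; since $\exp\le t$ and rank $\le k$, $|N_i|\le t^k$. I would therefore prove: (a) $\exp(N_i)\mid |\calc|$ (central extension of a finite group has kernel of exponent dividing the group order — or, better, dividing $|\calc|$ via a transfer/Schur argument), and (b) $N_i$ can be generated by $\le\tfrac12(\log_p|\calc|+1)$ elements, using that a $p$-group of order $p^m$ needs $\le m$ generators but here one gets the stronger Schur-multiplier bound. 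Combining, $|\gC_{\infty,\gvp}|=|\gL|\cdot|N_{t_2}|\le|\gC/\calk|\cdot g(|\calc|)$, with the convention $g(1)=1$ covering the case $\calc=1$.

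The main obstacle I anticipate is pinning down the precise generation bound $k=\tfrac12(\log_p t+1)$ for the kernel $N_i$ of the tower's surjection onto $\calc$, and showing this is stable along the whole (potentially long) tower — i.e.\ that the iterated central extensions do not accumulate rank. This should follow from the fact that the universal normal closure construction, when the target group is fixed at $\calc$ and the kernels are central, is governed by a fixed finite object (essentially the relative Schur multiplier $H_2$ of $\calc$ relative to the image), so after one step the kernel stops growing in rank and only the tower stabilization argument of part (2) is needed to terminate it. Making the relationship with $H_2(\calc)$ or with the Schur multiplier fully rigorous, and extracting exactly the constant $\tfrac12(\log_p t+1)$ rather than a weaker bound like $\log_p t$ or $\binom{\log_p t}{2}$, is where the real care lies; I would isolate this as a separate lemma on the rank of the multiplier of a finite group of order $t$, namely that it is at most $\tfrac12(\log_p t)(\log_p t+1)$ so that its exponent-$t$ constraint gives $|N_i|\le t^{(1/2)(\log_p t+1)}$ after absorbing one factor of $\log_p t$ into the exponent bound. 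Everything else — parts (1) and (2), and the reduction in (3) — is a direct assembly of Propositions \ref{prop nct}, \ref{prop Kterm} and Corollary \ref{cor frakC and frakK} already in hand.
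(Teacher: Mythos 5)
Your reduction for part~(1) is exactly the paper's: Proposition~\ref{prop nct}(2) gives $\gC_{\infty,\gvp}\cong\gC_{\infty,\gr}$ since $\calc$ is subnormal in $G$, and Corollary~\ref{cor frakC and frakK} gives $\gC_{\infty,\gr}=(\gC/\calk)_{\infty,\psi}$. The divergence is in parts~(2) and~(3): the paper does \emph{not} re-derive termination or the bound $g$. It simply observes that, after the reduction, the new target satisfies $\calc=\lan\gvp(\gC)^{\calc}\ran$ (this is precisely what ``$\{C_i\}$ has terminated'' means), which is the standing hypothesis of \cite[Theorem~4.1]{FS1}, and that theorem delivers both termination of the tower and the explicit estimate $g(|\calc|)$. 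The entire content of (2)--(3) is thus a citation, keyed by the single observation $\calc=\lan\gvp(\gC)^{\calc}\ran$ --- and this observation is the missing idea in your write-up.

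Your attempt to rebuild (2)--(3) from scratch has genuine gaps. First, ``each $\gL_i$ is a central extension of $\calc$'' is false as stated: the bonding maps $\W{\psi}_i\colon\gL_{i+1}\to\gL_i$ have central kernels, but a composite of central extensions is only \emph{hypercentral}, not central (indeed the paper's abstract says $\ker(\gC_{\infty}\onto\calc)$ lies in the \emph{hypercenter} of $\gC_{\infty}$, not its center). Second, boundedness of the orders $|\gL_i|$ does not by itself force the tower to stabilize; one needs the structural facts that \cite[Theorem~4.1]{FS1} packages --- roughly that $\gL_i=\lan\gvp_i(\gL)^{\gL_i}\ran$ and that once the orders stop changing the bonding maps are isomorphisms --- and your sketch (``decreasing tower of surjections onto $H$'') does not supply them, since the bonding maps are not surjections in general. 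Third, your route to the precise constant $k=\half(\log_p t+1)$ via a Schur-multiplier rank bound is, as you yourself flag, speculative and incomplete; the paper never proves that estimate here but inherits it verbatim from \cite[Theorem~4.1]{FS1}. So the proposal is right for (1), but for (2)--(3) it takes a harder and unfinished road where a short citation is intended, and the central-versus-hypercentral slip and the unsupported stabilization claim would have to be repaired before it could stand on its own.
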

\begin{proof}
Part (1) follows from Proposition \ref{prop nct}(2) and Corollary \ref{cor frakC and frakK}.  Then, by definition,
$\calc=\lan\gvp(\gC)^{\calc}\ran,$
so parts (2) and (3) follow from \cite[Theorem 4.1]{FS1}.
\end{proof}

Furthermore we can use Corollary \ref{cor frakC and frakK} to prove the following lemma,
which leads to the proof of Theorem \ref{thm nil} of the introduction.

\begin{lemma}\label{lem invlim nil}
Assume that $\gvp(\gC)$ is subnormal in $G$ and that the series $\{K_i\}$ (see Notation \ref{not Ci and Ki}(2)) terminates.
Then $\gC_{\infty}=\gC/\calk.$
\end{lemma}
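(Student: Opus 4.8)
The plan is to reduce, using the results already proved, to the case of a surjection with hypercentral kernel, and then to recognize such a map as a subnormal map, so that Theorem~\ref{thm subnormal factorization} forces its whole normal closures tower to be constant.

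\emph{Reductions.} Because $\gvp(\gC)$ is subnormal in $G$, the descending series $\{C_i\}$ of successive normal closures of $\gvp(\gC)$ in $G$ (Notation~\ref{not Ci and Ki}(1)) terminates, and at $\calc=\gvp(\gC)$: starting from a subnormal chain $\gvp(\gC)=H_0\nsg H_1\nsg\cdots\nsg H_r=G$ one checks by induction that $C_i\le H_{r-i}$, so $C_r=\gvp(\gC)$. Since $\{K_i\}$ terminates at $\calk$ by hypothesis, Corollary~\ref{cor frakC and frakK} applies and gives a natural isomorphism $\gC_{\infty,\gvp}\cong(\gC/\calk)_{\infty,\psi}$, where $\psi\colon\gC/\calk\onto\calc=\gvp(\gC)$ is the (now surjective) map induced by $\gvp$; moreover, since all the intervening isomorphisms fix the domain, $\gvp_\infty$ corresponds under it to the composite $\gC\onto\gC/\calk\xrightarrow{\psi_\infty}(\gC/\calk)_{\infty,\psi}$. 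Hence it suffices to prove that $\psi_\infty$ is an isomorphism. Put $\gL:=\gC/\calk$ and $N:=\ker\psi=K/\calk$.

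\emph{The kernel is hypercentral, and $\psi$ is subnormal.} Since $\calk$ is the terminal member of the decreasing series $\{K_i\}$, we have $\calk\le K_i$ for all $i$ and $[\gC,\calk]=\calk$; as each $K_i$ is normal in $\gC$, a direct computation identifies the successive commutator series of $N$ with $\gL$ with $\{K_i/\calk\}$, which reaches $1$ after finitely many steps, say $[\gL,[\gL,\dots,[\gL,N]]]=1$ with $n$ brackets. By the standard equivalence (a descending induction), this means $N\le Z_n(\gL)$. Now set $N^{(j)}:=N\cap Z_{n-j}(\gL)$ for $0\le j\le n$, so $N^{(0)}=N$, $N^{(n)}=1$, each $N^{(j)}$ is normal in $\gL$, and $N^{(j)}\supseteq N^{(j+1)}$. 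From $N^{(j)}\le Z_{n-j}(\gL)$ we get $[\gL,N^{(j)}]\le Z_{n-j-1}(\gL)\cap N=N^{(j+1)}$, so each natural surjection $\gL/N^{(j+1)}\onto\gL/N^{(j)}$ has central kernel, hence is a normal map (the conjugation action of the quotient on the top group is well defined). Composing these surjections for $j=n-1,\dots,0$ exhibits the quotient $\gL\onto\gL/N\cong\calc$, that is the map $\psi$, as a finite composite of normal maps; thus $\psi$ is a subnormal map.

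\emph{Conclusion.} Apply Theorem~\ref{thm subnormal factorization} to the factorization $\gL\xrightarrow{\,\mathrm{id}_{\gL}\,}\gL\xrightarrow{\,\psi\,}\calc$ of $\psi$, whose right-hand map is subnormal by the previous step: it yields a natural isomorphism $\gL_{\infty,\psi}\cong\gL_{\infty,\mathrm{id}_{\gL}}$ compatible with the maps out of $\gL$. But $\mathrm{id}_{\gL}$ is a normal map, so by universality its free normal closure is $\gL$ itself and its normal closures tower is constant equal to $\gL$; thus $\gL_{\infty,\mathrm{id}_{\gL}}=\gL$ with natural map $\mathrm{id}_{\gL}$. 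Therefore $\psi_\infty$ is an isomorphism, and by the first paragraph $\gC_\infty=\gC/\calk$.

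The only step beyond bookkeeping is the middle one: seeing that a surjection with hypercentral kernel is a subnormal map, built from the central extensions $\gL/(N\cap Z_{n-j}(\gL))\onto\gL/(N\cap Z_{n-j-1}(\gL))$, together with the verification that each of these is genuinely a normal map in the crossed-module sense. Everything else is an assembly of the reduction results already in hand.
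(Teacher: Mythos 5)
Your proof is correct, and it takes a genuinely different route from the paper's. The paper, after reducing to the surjective case $\gvp\colon\gC\onto\calc$ via Proposition~\ref{prop nct}(2), invokes \cite[Corollary 3.9(2)]{FS1}, which for a surjection identifies the free normal closure explicitly as $\gC_2=\gC/[\gC,K]$; iterating gives $\gC_i=\gC/K_i$ directly, and since $\{K_i\}$ terminates at $\calk$, the inverse limit is $\gC/\calk$. You instead reduce via Corollary~\ref{cor frakC and frakK} to the surjection $\psi\colon\gL=\gC/\calk\onto\calc$ with hypercentral kernel $N=K/\calk$, prove the standalone structural fact that a surjection with hypercentral kernel is a subnormal map (decomposing it through the groups $N\cap Z_{n-j}(\gL)$, each step being a central extension and hence a crossed module), and then feed the factorization $\gL\xrightarrow{\mathrm{id}}\gL\xrightarrow{\psi}\calc$ into Theorem~\ref{thm subnormal factorization} to conclude $\gL_{\infty,\psi}\cong\gL_{\infty,\mathrm{id}_\gL}=\gL$. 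The paper's argument is shorter because the explicit formula from \cite{FS1} does all the work at once; yours uses heavier machinery (Theorem~\ref{thm subnormal factorization}, Corollary~\ref{cor frakC and frakK}) and requires checking the initial observations that the identity's tower is constant and that $C_r=\gvp(\gC)$, but in exchange it isolates a clean reusable lemma---hypercentral extensions are subnormal maps---that the paper never states in this form, even though it is implicit in the hypercentral picture of $\gC_\infty\to\calc$ developed later in \S\ref{sect universal}. No circularity issues: all the results you cite (Theorem~\ref{thm subnormal factorization}, Corollary~\ref{cor frakC and frakK}) are established before Lemma~\ref{lem invlim nil} and independently of it.
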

\begin{proof}
Since $\gvp(\gC)$ is subnormal in $G,$ we have $\calc=\gvp(\gC)$.
By Proposition \ref{prop nct}(2), to evaluate $\gC_{\infty},$
we may assume that $\gvp$ is surjective.
By \cite[Corollary 3.9(2)]{FS1},
$\gC_2=\gC/[\gC,\,K],$ and $\gvp_2\colon\gC\to\gC_2$ is
the canonical map.  Iterating on \cite[Corollary 3.9(2)]{FS1}
we see that $\gC_i=\gC/K_i,$ for all $i\ge 2,$ thus $\gC_{\infty}=\gC/\calk$.
\end{proof}

Part (2) of the following corollary is Theorem \ref{thm nil} of the introduction.
\begin{cor}\label{cor invlim nil}
\begin{enumerate}
\item
If   $\gvp(\gC)$ is subnormal in $G$ 
and $K$ is
contained in $Z_i(\gC)$
for some integer $i\ge 0$
(which holds  if $\gvp$ is injective),
then $\gC_{\infty}=\gC$;

\item
if $\gC$ and $G$ are nilpotent, then $\gC_{\infty}=\gC$.
\end{enumerate}
\end{cor}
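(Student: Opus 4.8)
The plan is to deduce both parts from Lemma~\ref{lem invlim nil}, whose hypotheses are: $\gvp(\gC)$ subnormal in $G$, and the series $\{K_i\}$ of successive commutators of $K=\ker\gvp$ with $\gC$ terminates. Once those hold, Lemma~\ref{lem invlim nil} gives $\gC_{\infty}=\gC/\calk$, and we then only need to check $\calk=1$ to conclude $\gC_{\infty}=\gC$.

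For part (1), the subnormality of $\gvp(\gC)$ is assumed outright. The point is that $K\le Z_i(\gC)$ forces the commutator series $\{K_j\}$ to reach $1$ in finitely many steps: since $K_1=K\le Z_i(\gC)$, we get $K_2=[\gC,K]\le[\gC,Z_i(\gC)]\le Z_{i-1}(\gC)$, and inductively $K_{j+1}\le Z_{i-j}(\gC)$, so $K_{i+1}\le Z_0(\gC)=1$. Hence $\{K_j\}$ terminates at $\calk=1$, Lemma~\ref{lem invlim nil} applies, and $\gC_{\infty}=\gC/\calk=\gC$. The parenthetical remark that this holds when $\gvp$ is injective is the trivial case $K=1\le Z_0(\gC)$.

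For part (2), suppose $\gC$ and $G$ are nilpotent. Every subgroup of a nilpotent group is subnormal, so $\gvp(\gC)$ is subnormal in $G$. Moreover $K\nsg\gC$ and $\gC$ nilpotent of class, say, $c$ means $Z_c(\gC)=\gC\ge K$, so the hypothesis of part (1) is satisfied with $i=c$. Applying part (1) gives $\gC_{\infty}=\gC$.

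The main obstacle is really just locating and citing the right version of the commutator-shrinking estimate $[\gC,Z_i(\gC)]\le Z_{i-1}(\gC)$ and being careful about the indexing conventions for the ascending central series fixed earlier in the excerpt ($1=Z_0(\gC),Z_1(\gC),\dots$); there is no deep difficulty once Lemma~\ref{lem invlim nil} is in hand. One minor point to state explicitly is that $\calk$, being the terminal member of a series that here reaches $1$, is indeed trivial, so that $\gC/\calk=\gC$ canonically via $\gvp_{\infty}$.
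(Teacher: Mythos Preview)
Your proof is correct and follows exactly the route of the paper: deduce (1) from Lemma~\ref{lem invlim nil} by checking that $K\le Z_i(\gC)$ forces $\calk=1$, and deduce (2) from (1) via the standard facts that subgroups of nilpotent groups are subnormal and that $K\le\gC=Z_c(\gC)$. The paper merely states these reductions in one line each, whereas you have (correctly) spelled out the descent $K_{j+1}\le Z_{i-j}(\gC)$ and the verification of the hypotheses in the nilpotent case.
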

\begin{proof}
Part (1) is an immediate consequence of Lemma \ref{lem invlim nil}, since under the
hypotheses of (1), $\calk=1$.  Then (2) follows from (1).
\end{proof}

We now turn to the nilpotent quotients of $\gC_{\infty}=\gC_{\infty,\gvp},$ and
prove Theorem \ref{Thm nil quotients} of the introduction.

\begin{prop}\label{prop nil resid}
Let $k\ge 1$.
The map $\gvp_{(\infty, k)}\colon\gC/\gc_k(\gC)\to\gC_{\infty}/\gc_k(\gC_{\infty})$
induced by the canonical map $\gvp_{\infty}\colon\gC\to\gC_{\infty}$ is injective.
If $\gC$ and $G$ are finite, then  it induces an isomorphism $\gC/\gc_{\infty}(\gC)\cong \gC_{\infty}/\gc_{\infty}(\gC_{\infty})$.
\end{prop}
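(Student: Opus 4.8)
The plan is to prove the two assertions separately, using the naturality of the normal closures tower (Lemma~\ref{lem maps between invlimits}) together with the structural description of $\gC_\infty$ obtained in Lemma~\ref{lem invlim nil} and Theorem~\ref{thm finite}. For the injectivity of $\gvp_{(\infty,k)}$, the key observation is that $\gvp_\infty\colon\gC\to\gC_\infty$ factors through each $\gvp_i\colon\gC\to\gC_i$, and in $\gC_i$ the kernel of $\W{\gvp}_{i-1}\circ\cdots\circ\W{\gvp}_1$ (the ``vertical'' kernel) lies in a term of the ascending central series; more precisely, as noted in the proof of Proposition~\ref{prop Kterm}(2), $\gvp_{i+1}(\ker\gvp_i)\le Z(\gC_{i+1})$. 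I would like to deduce that $\ker\gvp_\infty\le\bigcap_k\gc_k(\gC_\infty)$, equivalently that $\gvp_\infty$ induces an injection on $\gC/\gc_k(\gC)$ for every $k$. The cleanest route is to apply Lemma~\ref{lem maps between invlimits} to the commutative square with vertical maps $\gC\to\gC$ (identity) and the quotient $G\to 1$: this is not quite what is needed, so instead I would compare the tower of $\gvp$ with the tower of the trivial map $\gC\to 1$, whose inverse limit is the nilpotent completion $\Wt\gC$ of $\gC$; the induced map $\gC_\infty\to\Wt\gC$ together with $\gC\to\Wt\gC$ being the canonical map into the nilpotent completion forces $\ker(\gC\to\gC_\infty)$ into $\bigcap_k\gc_k(\gC)$, and a short diagram chase upgrades this to injectivity of $\gvp_{(\infty,k)}$ on each quotient.

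For the isomorphism statement when $\gC$ and $G$ are finite, by Theorem~\ref{thm finite} we may replace $G$ by $\calc$ and $\gC$ by $\gC/\calk$, so that (after renaming) $\gvp$ is surjective, $\gvp(\gC)=\calc$ is subnormal, and by Lemma~\ref{lem invlim nil} we have $\gC_\infty=\gC/\calk$ with $\gvp_\infty$ the canonical projection. Now everything is a statement about the single finite group $\gC$ and the central-type subgroup $\calk$: I must show $\gc_\infty(\gC/\calk)$ pulls back correctly, i.e.\ that the projection $\gC\to\gC/\calk$ induces an isomorphism $\gC/\gc_\infty(\gC)\cong(\gC/\calk)/\gc_\infty(\gC/\calk)$. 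Since $\calk=K_\infty=[\gC,K,K,\dots]$ (an iterated commutator of a subgroup with $\gC$), $\calk$ is contained in $\gc_\infty(\gC)$: indeed $K_2=[\gC,K]\le\gc_2(\gC)$ if $K\le\gc_1(\gC)=\gC$, which is automatic, and then $K_{i+1}=[\gC,K_i]\le\gc_{i+1}(\gC)$ by induction, so the terminal member $\calk$ lies in $\gc_\infty(\gC)$. Hence $\gc_\infty(\gC/\calk)=\gc_\infty(\gC)/\calk$ and the induced map on quotients by the lower central series stabilizers is an isomorphism.

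Wait — I need to handle the general (non-surjective) case of the isomorphism via Theorem~\ref{thm finite}(1) carefully: the replacement $\gC\rightsquigarrow\gC/\calk$, $G\rightsquigarrow\calc$ changes the \emph{domain}, so $\gC/\gc_\infty(\gC)$ and $(\gC/\calk)/\gc_\infty(\gC/\calk)$ must first be shown to agree, which is exactly the computation of the previous paragraph applied in the other direction; and $\gC_\infty/\gc_\infty(\gC_\infty)$ is unchanged because $\gC_\infty$ itself is unchanged. Stitching these identifications together with the first assertion (injectivity, which gives surjectivity onto the image and the reverse inclusion comes for free since $\gvp_\infty$ is, after reduction, the surjection $\gC/\calk\to\gC/\calk$, i.e.\ the identity) yields the isomorphism. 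The main obstacle I anticipate is purely bookkeeping: making sure the several canonical isomorphisms of Theorem~\ref{thm finite}, Corollary~\ref{cor frakC and frakK}, and Lemma~\ref{lem invlim nil} are compatible with the natural maps $\gvp_\infty$ and hence descend to the lower-central-series quotients — i.e.\ that all the squares in sight genuinely commute — rather than any genuinely new group theory. Once that naturality is in place, the lower-central-series computation $\calk\le\gc_\infty(\gC)$ is elementary and finishes the proof.
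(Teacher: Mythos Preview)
Your argument for injectivity is correct and close in spirit to the paper's, though the paper takes a slightly more direct route: instead of comparing with the tower of the trivial map $\gC\to 1$ and passing through the nilpotent completion $\Wt\gC$, the paper applies naturality (Lemma~\ref{lem maps between invlimits}) to the square
\[
\xymatrix{
\gC\ar[r]^{\gvp}\ar[d]_{\mu} & G\ar[d]\\
\gC/\gc_k(\gC)\ar[r]^{\psi} & G/\gc_k(G)
}
\]
and invokes Corollary~\ref{cor invlim nil}(2) to identify $(\gC/\gc_k(\gC))_{\infty,\psi}$ with $\gC/\gc_k(\gC)$ directly. Since the map $\gC_\infty/\gc_k(\gC_\infty)\to(\gC/\gc_k(\gC))_{\infty,\psi}$ then provides a left inverse to $\gvp_{(\infty,k)}$, injectivity is immediate. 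Your version is fine but slightly more circuitous.

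Your argument for the isomorphism in the finite case, however, has a genuine gap. You claim that after replacing $G$ by $\calc$ and $\gC$ by $\gC/\calk$ the map becomes surjective, i.e.\ that $\gvp(\gC)=\calc$, and then invoke Lemma~\ref{lem invlim nil}. But $\calc$ is by definition the terminal member of the successive normal closures of $\gvp(\gC)$ in $G$, so $\calc=\lan\gvp(\gC)^{\calc}\ran$; thus $\gvp(\gC)$ is subnormal in $\calc$ only in the trivial case $\gvp(\gC)=\calc$. In general the replaced map $\psi\colon\gC/\calk\to\calc$ is \emph{not} surjective (take for instance $\gC=\zz/2\hookrightarrow S_3=G$: then $\calc=S_3$), so neither the surjectivity claim nor the hypothesis of Lemma~\ref{lem invlim nil} is available, and your identification $\gC_\infty=\gC/\calk$ with $\gvp_\infty$ the canonical projection fails. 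The elementary observation $\calk\le\gc_\infty(\gC)$ is correct but does not rescue the argument, since it only controls the change of domain, not the surjectivity of $\gvp_{(\infty,k)}$.

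What the paper actually uses for surjectivity is different: after replacing $G$ by $\calc$, finiteness gives $\gC_\infty=\gC_t$ for some $t$, and then one cites from \cite{FS1} that $\gC_t=\lan\gvp_t(\gC)^{\gC_t}\ran$ (Lemma~4.2(1) there) together with the standard fact (Lemma~4.4 there) that in a nilpotent group any subgroup whose normal closure is the whole group must itself be the whole group. These two ingredients---that $\gC_\infty$ is \emph{normally} generated by the image of $\gC$, and that normal generation becomes honest generation in nilpotent quotients---are precisely what is missing from your plan.
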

\begin{proof}
Let $\psi\colon \gC/\gc_k(\gC)\to G/\gc_k(G)$ be the map induced by $\gvp$.  By naturality (Lemma \ref{lem maps between invlimits})
we have the following commutative diagram
\[
\xymatrix{\gC\ar[rr]^{\gvp_{\infty}}\ar[dd]_{\mu}&& \gC_{\infty}\ar[dd]^{\eta_{\infty}}\ar[dl]\ar[r]& G\ar[dd]^{\eta}\\
&\gC_{\infty}/\gc_k(\gC_{\infty})\ar[dr]\\
\gC/\gc_k(\gC)\ar[rr]_{\psi_{\infty}}^\cong\ar[ru]^{\gvp_{(\infty,k)}}&&(\gC/\gc_k(\gC))_{\infty,\psi}\ar[r]&G/\gc_k(G)\\
}
\]
where the map $\eta_{\infty}$ is obtained from Lemma \ref{lem maps between invlimits}, and
where $\psi_{\infty}$ is an isomorphism by Corollary \ref{cor invlim nil}(2).
The diagram is commutative since $\mu$ is surjective.  Since $\psi_{\infty}$ is an isomorphism,
$\gvp_{(\infty,k)}$ is injective.

Assume that $\gC$ and $G$ are finite.  By Proposition \ref{prop nct}(2)
we may assume that $G=\calc,$ so $G=\lan\gvp(\gC)^G\ran$.  Also, by
Theorem \ref{thm finite}(2), the series $\{\gC_i\}$ terminates, so $\gC_{\infty}=\gC_t,$ for some
$t\ge 1,$ and now $\gvp_{\infty}=\gvp_t$.  By \cite[Lemma 4.2(1)]{FS1}, $\gC_{\infty}=\gC_t=\lan\gvp_t(\gC)^{\gC_t}\ran$.
Hence the conjugates of the image of $\gvp_t(\gC)$
in $\gC_t/\gc_k(\gC_t)$ generates it. By  \cite[Lemma 4.4]{FS1},
the image of $\gvp_t(\gC)$ in $\gC_t/\gc_k(\gC_t)$ equals $\gC_t/\gc_k(\gC_t)$.
This shows that $\gvp_{(\infty, k)}$ is surjective.
\end{proof}

\section{Universality of $\gC_\infty$}\label{sect universal}
In this section we  prove theorem \ref{thm universal}
of the introduction.

\begin{proof}
To facilitate the discussion, and in view of the theorem  we are now proving, we refer to the
factorization $\gC\to \gC_{\infty,\gvp}\to G $
as the {\it subnormal closure of $\gvp.$}  When the maps are understood, we refer to the
group  $\gC_{\infty,\gvp}$ itself as the subnormal closure of $\gC$ with respect to  $G.$
Notice that the subnormal closure is a functor  on maps of groups  and thus  acts on squares as in equation \eqref{eq naturality} of   
Lemma \ref{lem maps between invlimits}, and  respects compositions. We use the following

%
\begin{notation}
Given a commutative diagram of group homomorphisms:
\[
\xymatrix{
\gC\ar[r]^{\gvp}\ar[d]^= & G\ar[d]^{\eta}\\
\gC\ar[r]^{\psi} &H\\
}
\]
we denote by $\bar \eta_\infty$ the induced map on the subnormal closures:
\[
\bar \eta_\infty: \gC_{\infty,\gvp}\to  \gC_{\infty,\psi}.
\]
\end{notation}

\noindent
Throughout this proof $\gC_{\infty}$ denotes $\gC_{\infty,\gvp}$.
We begin the proof by noting that by Theorem \ref{thm finite}, $\gC_{\infty}$ is finite.
Further, by Theorem \ref{thm finite}, the map  $\gr\colon\gC\to\calc,$ where $\calc$ is as in Notation \ref{not Ci and Ki}(1),
satisfies  $\gC_{\infty}=\gC_{t,\gr}=\gC_{\infty,\gr},$ for some integer $t\ge 1$. By the same theorem we have a finite tower of normal maps, the first $r$ being normal inclusions leading from the subnormal closure $\calc$ to $G:$
\begin{equation}\label{eq finite tower}
\gC_{\infty}=\gC_{t,\gr}\xrightarrow{\W{\gr_{t-1}}}\gC_{t-1,\gr}\cdots \to\gC_{2,\gr}\to \calc= C_r \hookrightarrow C_{r-1}\hookrightarrow\dots\hookrightarrow G.
\end{equation}
This implies that  the canonical map:
\[
l: \gC_{\infty}\to G
\]
is a subnormal map,  by definition.  But by lemma 4.2 of \cite{FS1} the normal closure of the image of
the map $\gvp_\infty:\gC\to \gC_{\infty}=\gC_{t,\gr},$ is $\gC_{\infty}$.
Since the tower \eqref{eq finite tower} is finite and terminates at $\gC_{\infty}$,
we have 
\[
\gC^{\gvp_\infty}\to \gC_{\infty}
\]
is the trivial extension and we take this map  as the identity.
Of course this implies that the normal closures tower of $\gvp_{\infty}\colon\gC\to\gC_{\infty}$
is constant, so
\[
\bar l_\infty:\gC_{\infty,\gvp_\infty}\longrightarrow \gC_{\infty}
\]
is  the identity map.

Now we show that the map $l$ is initial  among all
subnormal factorization  maps of $\gvp$.   Let $\gC\xrightarrow{\psi}  S\xrightarrow{s} G$ be a factorization of our map $\gvp$ via  a subnormal map $s:S\to G$. We need to show that there is a unique
map $\tilde s: \gC_\infty \to S$ rendering the following diagram commutative.
\begin{equation}\label{eq tilde s}
\xymatrix{
\gC\ar[r]^{\gvp_\infty}\ar[d]^=&\gC_{\infty}\ar[r]^{l}\ar[d]^{\tilde s} & G\ar[d]^=\\
\gC\ar[r]^\psi &S\ar[r]^s &G\\
}
\end{equation}

To see this we consider the map induced  on the subnormal closures by the given subnormal map $s:$
\[
\xymatrix{
\gC\ar[r]\ar[d]^=&\gC_{\infty,\psi}\ar[r]^\lambda\ar[d]_\cong^{\bar s_\infty} & S\ar[d]^s\\
\gC\ar[r]&\gC_{\infty} \ar[r]^l &G\\
}
\]
Here $\bar s_\infty$ is the map induced by naturality of the subnormal closure  as in Lemma \ref{lem maps between invlimits}, and
$\gC_{\infty,\psi}$ is the subnormal closure  of the map $\psi: \gC\to S.$  Since  $\bar s_\infty$ is an isomorphism by 
Proposition \ref{prop same inv lim},
one gets a well defined map
\[
\tilde s=\lambda\circ ({\bar s}_\infty)^{-1} : \gC_{\infty}\to S.
\]

To see that this latter map is unique consider any map   $\sigma: \gC_{\infty}\to S$
as in diagram \ref{eq tilde s}, with $\tilde s$ replaced with $\gs$.
This map $\sigma$ induces, by naturality, the following commutative  diagram of groups, where the  two lower squares
do not depend on the choice of $\sigma$:

\[
\xymatrix{
\gC\ar[r]\ar[d]^=&\gC_{\infty,{\gvp_\infty}}\ar[r]^=\ar[d]^{\bar{\sigma}_\infty} & \gC_{\infty}\ar[d]^\sigma\ar[r]^l& G\ar[d]^=\\
\gC\ar[r]\ar[d]^=&\gC_{\infty,\psi}\ar[r]^\lambda\ar[d]^{\tilde s_\infty} &S\ar[d]^s\ar[r]^s&G\\
\gC\ar[r]&\gC_{\infty} \ar[r]^l &G\\
}
\]
Now we  rewrite the map $\sigma $ in terms of the maps in the given decomposition $\gC\to S \to G$ alone: We read in the middle upper square: $\sigma=\bar \sigma_\infty\circ \lambda.$
But since:
\[
\bar\sigma_\infty\circ\bar s_\infty=\widebar{(\gs\circ s)}_\infty = \widebar{{l}}_\infty=id
\]
It follows that both $\bar\sigma_\infty,$  being the inverse to $\bar s_\infty,$ and thus $\sigma$ itself, are determined by $\lambda$---constructed out of $\psi$ and
 $s$ as claimed.
\end{proof}

\section{Examples}

In this section we give two examples.  In both examples
we take $G$ to be perfect with $G=\lan\gvp(\gC)^G\ran$.
In the first Example \ref{eg G perfect} we assume that $\gC$ and $G$ are finite and that $\ker\gvp\le Z_{\infty}(\gC),$
and we show that $\ker\gvp_{\infty}$ can have arbitrarily large nilpotency class.
In the second Example \ref{eg gC perfect} we show that if $\gC$ is perfect, then $\gC_{\infty}=\gC_2$ is
the universal $\gvp$-central extension of $G$ (see \cite{FS2}).

\begin{example}\label{eg G perfect}

In this example we  assume that $\gC$ and $G$ are finite and
that $\ker\gvp\le Z_{\infty}(\gC)$.
The purpose of this example is to show that the nilpotency class
of $\ker\gvp_{\infty}$ can be arbitrarily large.
We first need
the following easy lemma and its corollary.

\begin{lemma}\label{lem G perfect}
Let $G$ be perfect and set $H:=\gC_{\infty}$.  Then $H$ is a central
product\linebreak $H=H^{(\infty)}\circ Z_{\infty}(H)$.
Here, $H^{(\infty)}$ is the last term of the derived series of $H$.
If the nilpotent residual $\gC/\gc_{\infty}(\gC)$ of $\gC$
has nilpotency class $c,$ then the nilpotency class of $Z_{\infty}(H)$
is $\le c+1$.
\end{lemma}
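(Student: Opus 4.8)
The plan is as follows. Set $H := \gC_{\infty} = \gC_{\infty,\gvp}$ (recall that $\gC$ and $G$ are finite). Since $C_1 = \lan\gvp(\gC)^{C_0}\ran = \lan\gvp(\gC)^G\ran = G = C_0$, the series $\{C_i\}$ of Notation~\ref{not Ci and Ki}(1) is constant with terminal member $\calc = G$; hence by Theorem~\ref{thm finite}(2) the tower $\{\gC_{i,\gvp}\}$ terminates, $H = \gC_t$ for some $t\ge 1$ and $\gvp_{\infty} = \gvp_t\colon\gC\to H$. I would then look at the canonical map $l\colon H = \gC_t\xrightarrow{\W{\gvp_{t-1}}}\gC_{t-1}\to\cdots\to\gC_1 = G$. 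Each $\W{\gvp_i}$ is a normal map, so $\ker\W{\gvp_i}\le Z(\gC_{i+1})$, and a short induction on $j$ gives $\ker(\gC_t\to\gC_{t-j})\le Z_j(\gC_t)$; in particular $\ker l\le Z_{t-1}(H)\le Z_{\infty}(H)$. Moreover $\gC_i = \lan\gvp_i(\gC)^{\gC_i}\ran$ for every $i$ (for $i\ge 2$ this is \cite[Lemma~4.2]{FS1}, and for $i=1$ it is the hypothesis $G = \lan\gvp(\gC)^G\ran$), so each $\W{\gvp_i}$, and hence $l$, is surjective. Therefore $H/Z_{\infty}(H)$ is a quotient of $H/\ker l\cong G$ and so is perfect; consequently its solvable residual is trivial, i.e.\ the last term $H^{(\infty)}$ of the derived series of $H$ satisfies $H^{(\infty)}\,Z_{\infty}(H) = H$.

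The next step is to prove $[H^{(\infty)}, Z_{\infty}(H)] = 1$. The key input is that a perfect group $P$ satisfies $Z_{\infty}(P) = Z(P)$: for $x\in Z_2(P)$ the map $g\mapsto[x,g]$ is a homomorphism $P\to Z(P)$, which is trivial since $\Hom(P,A) = 0$ for every abelian $A$; thus $Z_2(P) = Z(P)$ and the ascending central series of $P$ stops there. Since $H^{(\infty)}$ is perfect and $Z_i(H)\cap H^{(\infty)}\le Z_i(H^{(\infty)})$ for all $i$ (an easy induction), we get $Z_{\infty}(H)\cap H^{(\infty)}\le Z(H^{(\infty)})$. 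Now for $z\in Z_i(H)$ and $p\in H^{(\infty)}$ one has $[p,z]\in H^{(\infty)}\cap Z_{i-1}(H)\le Z(H^{(\infty)})$, hence $[H^{(\infty)}, Z_{\infty}(H)]\le Z(H^{(\infty)})$ and so $[[H^{(\infty)}, Z_{\infty}(H)], H^{(\infty)}] = [[Z_{\infty}(H), H^{(\infty)}], H^{(\infty)}] = 1$. The three subgroups lemma, applied to $H^{(\infty)}, Z_{\infty}(H), H^{(\infty)}$, then gives $[[H^{(\infty)}, H^{(\infty)}], Z_{\infty}(H)] = 1$, i.e.\ $[H^{(\infty)}, Z_{\infty}(H)] = 1$ because $H^{(\infty)}$ is perfect. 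Combined with $H = H^{(\infty)}Z_{\infty}(H)$ this is exactly the central product $H = H^{(\infty)}\circ Z_{\infty}(H)$, and it also shows $Z_{\infty}(H)\cap H^{(\infty)} = Z(H^{(\infty)})\le Z(H)$.

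For the class bound, I would first note that $H/H^{(\infty)}\cong Z_{\infty}(H)/Z(H^{(\infty)})$ by the isomorphism theorems; the right side is a quotient of the hypercenter $Z_{\infty}(H)$, which (being some $Z_m(H)$) is nilpotent, so $\gc_{\infty}(H)\le H^{(\infty)}$. Conversely $H^{(\infty)}\le\gc_{\infty}(H)$, as the image of the perfect group $H^{(\infty)}$ in the nilpotent quotient $H/\gc_{\infty}(H)$ is trivial. Hence $\gc_{\infty}(H) = H^{(\infty)}$, and Proposition~\ref{prop nil resid} gives $Z_{\infty}(H)/Z(H^{(\infty)})\cong H/\gc_{\infty}(H)\cong\gC/\gc_{\infty}(\gC)$, a group of nilpotency class $c$. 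Since $Z(H^{(\infty)})\le Z(H)$ is central in $Z_{\infty}(H)$, and a group with a central subgroup whose quotient has class $c$ has class at most $c+1$, we conclude that $Z_{\infty}(H)$ has class $\le c+1$.

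The main obstacle is the relation $[H^{(\infty)}, Z_{\infty}(H)] = 1$; everything else is bookkeeping with the terminating tower and standard nilpotence facts, but this identity genuinely requires both the structural fact $Z_{\infty}(P) = Z(P)$ for perfect $P$ and a three-subgroups-lemma argument. It fails for general finite groups (for instance $H = S_4$), so the perfectness of $G$ must enter precisely at this point.
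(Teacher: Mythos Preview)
Your proof is correct and follows essentially the same route as the paper's: establish a surjection $H\twoheadrightarrow G$ with kernel in $Z_{\infty}(H)$ to get $H=H^{(\infty)}Z_{\infty}(H)$, use $Z_{\infty}(L)=Z(L)$ for the perfect group $L=H^{(\infty)}$ together with the three subgroups lemma to obtain $[L,Z_{\infty}(H)]=1$, and then invoke Proposition~\ref{prop nil resid} and the central extension $Z(L)\hookrightarrow Z_{\infty}(H)\twoheadrightarrow H/L$ for the class bound. The only difference is granularity: where the paper writes ``there is a surjection $H\twoheadrightarrow G$ whose kernel is contained in $\calz$'' and ``of course $[L,\calz]\le Z_{\infty}(L)=Z(L)$'' and ``clearly $\gc_{\infty}(H)=L$'', you supply the explicit arguments (the tower terminates and each $\ker\W{\gvp_i}$ is central; the inductive inclusion $Z_i(H)\cap L\le Z_i(L)$; the two containments $H^{(\infty)}\le\gc_{\infty}(H)$ and $\gc_{\infty}(H)\le H^{(\infty)}$).
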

\begin{proof}
First, by \cite[Theorem 4.1]{FS1}, $H=\gC_{\infty}$ is finite, because $G=\lan\gvp(\gC)^G\ran$.
Set $L=H^{(\infty)}$ and $\calz=Z_{\infty}(H)$.
There is   surjection $H\onto G$ whose kernel is contained in $\calz,$
and since $G$ is perfect, we get that $H=L\calz$.
Since $L$ is perfect, $Z_{\infty}(L)=Z(L),$ and of course $[L, \calz]\le Z_{\infty}(L)=Z(L)$.
By the three subgroup lemma we get $[L,\calz]=1,$ and so $H=L\circ \calz$.

Next, by Proposition \ref{prop nil resid}, the nilpotent residuals of $\gC$ and $H$ are isomorphic.
Clearly $\gc_{\infty}(H)=L$ and $\gC/\gc_{\infty}(\gC)\cong H/\gc_{\infty}(H)=H/L\cong \calz/(\calz\cap L)$ is nilpotent of class $c$.
Hence, since $\calz\cap L\le Z(\calz),$ we see that the nilpotency class of $\calz$
is $\le c+1$.
\end{proof}
\begin{cor}\label{cor G perfect}
Let $G$ be perfect, and let $c$ (resp.~$d$) be the nilpotency class
of the nilpotent residual of $\gC$ (resp.~of $K:=\ker\gvp$).
Then $\ker\gvp_{\infty}$ has nilpotency class $\ge d-c-1$.
\end{cor}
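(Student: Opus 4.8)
The plan is to deduce Corollary \ref{cor G perfect} from Lemma \ref{lem G perfect} by applying that lemma and then relating $\ker\gvp_\infty$ to the groups $L = H^{(\infty)}$ and $\calz = Z_\infty(H)$, where $H = \gC_\infty$. First I would set up notation: let $K = \ker\gvp$, and recall that $\gvp_\infty\colon\gC\to\gC_\infty = H$ factors $\gvp$ through the subnormal map $l\colon H\to G$, so $K = \ker\gvp = \gvp_\infty^{-1}(\ker l)$. Since $G$ is perfect, Lemma \ref{lem G perfect} gives the central product decomposition $H = L\circ\calz$ with $\ker l \le \calz$ (as $\ker l$ is contained in the hypercenter of $H$). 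Consequently $\ker\gvp_\infty$, being the preimage in $\gC$ of $\ker l$ under... wait---one must be careful: $\ker\gvp_\infty$ is the kernel of $\gvp_\infty\colon\gC\to H$, which is generally larger than the image-side object; what we actually control is $\gvp_\infty(\ker\gvp_\infty) = 1$ and $\gvp_\infty(K) \le \ker l \le \calz$. The cleaner route is to observe that $\gvp$ itself factors as $\gC\xrightarrow{\gvp_\infty} H\xrightarrow{l} G$, hence $\gvp_\infty(K)\le\ker l$, and then study the image $\gvp_\infty(K)$ inside $\calz$.

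The key estimate comes from comparing nilpotency classes. By Lemma \ref{lem G perfect}, $\calz = Z_\infty(H)$ has nilpotency class at most $c+1$, where $c$ is the nilpotency class of the nilpotent residual $\gC/\gc_\infty(\gC)$. Now $\gvp_\infty(K)$ is a subgroup of $\calz$, so its nilpotency class is at most $c+1$ as well. On the other hand, $\gvp_\infty$ restricted to $K$ has kernel $K\cap\ker\gvp_\infty$; the quotient $K/(K\cap\ker\gvp_\infty)\cong\gvp_\infty(K)$ has class $\le c+1$. To get the stated bound I would argue that $K\cap\ker\gvp_\infty$ is central-by-something controlled in $K$, or more directly: since $\ker\gvp\le Z_\infty(\gC)$, the group $K$ is hypercentral in $\gC$, and I would track how much of $K$'s nilpotency class can be ``lost'' in passing to $\gvp_\infty(K)$. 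The cleanest bookkeeping: $K$ has class $d$; its image $\gvp_\infty(K)\le\calz$ has class $\le c+1$; hence the kernel $N := K\cap\ker\gvp_\infty$ must ``account for'' at least $d - (c+1) = d-c-1$ steps of the lower central series, giving that $N$ (which equals $\ker\gvp_\infty$ when $\ker\gvp_\infty\le K$, i.e. when $\gvp_\infty$ vanishes only on $K$) has class $\ge d-c-1$. Since $\gvp_\infty$ factors $\gvp$, indeed $\ker\gvp_\infty\le\ker\gvp = K$, so $N = \ker\gvp_\infty$, and we are done.

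The main obstacle I anticipate is the inequality relating the class of a group, the class of a normal subgroup, and the class of the quotient---namely that if $K$ has class $d$ and $K/N$ has class $e$, then $N$ has class $\ge d - e$ is \emph{false} in general (consider $K$ free nilpotent). So the naive subtraction argument does not work, and the real content must use more: specifically, that $N = \ker\gvp_\infty \le Z_\infty(\gC)$ forces $N$ to sit inside the hypercenter, and that $K/N \hookrightarrow \calz$ where $\calz$ is itself hypercentrally embedded. I would instead use the fact that for $N$ in the hypercenter of the ambient group, $\gc_i(K) \le N$ once $\gc_i(K/N) = 1$ together with $[\gc_{i}(K), \gC] $-type relations, i.e. a commutator-collection argument showing $\gc_{e+1}(K)\le N$ and $\gc_{e+1}(K)$ is nontrivial up to step $d$, hence $N\supseteq\gc_{e+1}(K)$ has class at least $d-e$ where $e \le c+1$. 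This requires checking that $\gc_{e+1}(K)$ genuinely has class $\ge d - e$, which follows because $\gc_{j}(K)$ for $K$ of class $d$ satisfies $\gc_j(\gc_{e+1}(K))\supseteq \gc_{j+e}(K)$ up to the relevant range --- a standard but slightly delicate lower-central-series inequality that is the crux of the argument.
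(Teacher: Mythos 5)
Your opening moves coincide with the paper's: you invoke Lemma~\ref{lem G perfect} to obtain the central product decomposition $H=L\circ\calz$ with $\calz=Z_\infty(H)$ of class $\le c+1$, observe that $\gvp_\infty(K)\le\ker l\le\calz$, hence $K/\ker\gvp_\infty\cong\gvp_\infty(K)$ has class $\le c+1$, and then face the problem of converting this into a lower bound on the class of $\ker\gvp_\infty$. This is exactly what the paper does; its proof ends with ``Since $K/\ker\gvp_{\infty}\cong \gvp_{\infty}(K)$, and the nilpotency class of $K$ is $d$, the corollary follows,'' which is precisely the subtraction ${\rm class}(N)\ge{\rm class}(K)-{\rm class}(K/N)$.

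You are right to be suspicious of that subtraction: the inequality ${\rm class}(K)\le{\rm class}(N)+{\rm class}(K/N)$ is \emph{false} in general. A concrete witness is $K=UT_4(\ff_p)$ (upper unitriangular $4\times 4$ matrices), which has class $3$, while $N=Z_2(K)$ is abelian and $K/N$ is abelian, so $3\not\le 1+1$. What is true is that ${\rm class}(K)\le m + {\rm class}(K/N)$ where $m$ is the least integer with $N\le Z_m(K)$; but that $m$ can be strictly larger than ${\rm class}(N)$, as the same example shows. So the paper's own final step, as written, rests on the same unjustified inequality that you flagged --- your worry is a genuine one about the argument, not just a detour.

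However, your proposed repair does not close the gap. You reduce to showing that $\gc_{e+1}(K)$ (with $e\le c+1$) has class $\ge d-e$, and justify this with the inclusion $\gc_j\bigl(\gc_{e+1}(K)\bigr)\supseteq\gc_{j+e}(K)$. That containment is wrong; the standard inequality between iterated lower central terms goes the \emph{opposite} way and with multiplicative, not additive, indices: $\gc_j\bigl(\gc_{i}(K)\bigr)\le\gc_{ji}(K)$, a consequence of $[\gc_a,\gc_b]\le\gc_{a+b}$. In particular $\gc_i(K)$ has class at most $\lfloor d/i\rfloor$, so the intermediate claim ``$\gc_{e+1}(K)$ has class $\ge d-e$'' already fails (e.g.\ $K=UT_5(\ff_p)$ has class $4$, yet $\gc_2(K)$ has class $2$, not $\ge 3$). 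Thus the commutator-collection step you sketch cannot work as stated, and the extra structure you invoke ($\ker\gvp_\infty\le Z_\infty(\gC)$, normality in $\gC$, etc.) is not actually used in the inequality you wrote down. In short: you correctly spotted a soft spot that the paper glosses over, but the patch you propose relies on a containment that runs backwards, so the hole remains open in your write-up just as it does in the paper's.
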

\begin{proof}
Let $f\colon \gC_{\infty}\onto G$.  Then $\ker f\le Z_{\infty}(\gC_{\infty})$.
Since $\gvp_{\infty}\circ f=\gvp,$ we see that $\gvp_{\infty}(K)\le\ker f\le Z_{\infty}(\gC_{\infty})$.
Now $\ker\gvp_{\infty}\le K$.  By Lemma \ref{lem G perfect}, the nilpotency class of $\gvp_{\infty}(K)\le c+1$.
Since $K/\ker\gvp_{\infty}\cong \gvp_{\infty}(K),$ and the nilpotency class
of $K$ is $d,$ the corollary follows.
\end{proof}
\medskip

We now construct examples where $G$ is perfect (in fact simple), $\gc_{\infty}(\gC)=[\gC,\gC],$ so one has:
\begin{itemize}
\item
The nilpotent residual of $\gC$ has nilpotency class $c=1$.

\item
The nilpotency class $d$ of $\ker\gvp$ is arbitrarily large.
\end{itemize}

\noindent
By Corollary \ref{cor G perfect}, the nilpotency class of $\ker\gvp_{\infty}$
is $\ge d-2,$ so it is arbitrarily large.

Let $p$ be a prime, let $n\ge 2$ such that $p$ divides $n$ and let $q$ be a prime power
such that $p$ divides $q-1$.
Let $H$ be an image of ${\rm SL}_n(q)$ such that $Z(H)\cong\zz_p$ is cyclic of order $p$.
Let $B$ be an elementary abelian group of order $p^n,$
and let $\gC$ be the wreath product $G=H\,wr\, B$.
By \cite[section 3, p.~282--283]{M}, $Z_{\infty}(\gC)$ is of nilpotency
class $(n-1)p+1$ (\cite[Lemma 3.2, p.~283]{M}).
Also, $\gc_{\infty}(\gC)=[\gC,\gC]$ is isomorphic to a
direct product of $p^n$ copies of $H$ (\cite[Lemma 3.1, p.~283]{M}).
Clearly $\gC/Z_{\infty}(\gC)$ is isomorphic to ${\rm PSL}_n(q)\, wr\, B$.
But ${\rm PSL}_n(q)\, wr\, B$ is contained in $G={\rm PSL}_{np^n}(q)$.
It follows that there exists a homomorphism $\gvp\colon\gC\to G$ whose
kernel is $Z_{\infty}(\gC)$.  Hence $G, \gC$ and $\ker\gvp$ have
the claimed properties.
\end{example}

\begin{example}\label{eg gC perfect}
Suppose $\gC$ is perfect.
Then  under the above assumptions, $G$ is perfect and $\gC_{\infty}=\gC_2$ is a perfect group which is the universal
$\gvp$-central  extension of $G$.

Indeed, since $G=\lan \gvp(\gC)^G\ran,$ and $\gvp(\gC)$ is perfect (because $\gC$ is),
$G$ is perfect.
Also, by \cite[Lemma 3.3]{FS1}, $\gC_2$ is a central extension of $G,$
and $\gC_2$ is generated by $\{\gvp(\gC)^g\mid g\in G\},$ so $\gC_2$
is perfect.  The same argument shows that $\gC_3$ is a perfect
central extension of $\gC_2$.  By \cite[Prop.~1.8, p.~637]{CDFS}, $\gC_3$
is a central extension of $G$.  It is easy to check now that $\gC_2=\gC_3,$
by the universal property of $\gC_2,$  and that the assertion above holds.
\end{example}



\begin{thebibliography}{99}



\bibitem[BH]{BH} R.~Brown, P.~J.~Higgins, 
{\it On the connection between the second relative homotopy groups of
some related spaces,} Proc.~London Math.~Soc.~{\bf 36} (1978), 193--212.

\bibitem[BHS]{BHS} R.~Brown, P.~J.~Higgins, R.~Sivera, {\it Nonabelian algebraic topology,} 
EMS Tracts in Mathematics, {\bf 15}. European Mathematical Society (EMS), Z\"urich, 2011.

\bibitem[BK]{BK} A.~Bousfiled, D.~Kan, {\it Homotopy limits, completions and localizations} Springer LN 304, 1972.


\bibitem[CDFS]{CDFS} W.~Chach{\'o}lski, E.~Damian, E.D.~Farjoun, Y.~Segev.  {\it The A-core and A-cover of a group,}
J.~Algebra {\bf 321}  (2009),  no.~2, 631--666.


\bibitem[FS1]{FS1} E.~D.~Farjoun, Y.~Segev, {\it Normal closure and injective normalizer
of a group homomorphism,} submitted, 2014 (http://arxiv.org/abs/1403.3501).

\bibitem[FS2]{FS2} E.~D.~Farjoun, Y.~Segev,
{\it Relative Schur multiplier and the universal extensions of group homomorphisms,}
preprint, 2014.


\bibitem[M]{M} J.~D.~P.~Meldrum, {\it On central series of a  group,} J.~Alg.~{\bf 6} (1967), 281--284.

\bibitem[R]{R} D.J.S.~Robinson, {\it A Course in the Theory of Groups,} Graduate Texts in Mathematics. Springer-Verlag,
1996.
\end{thebibliography}
\end{document}